\theoremstyle{plain}
\newtheorem{theorem}{Theorem}[section]
\newtheorem{corollary}[theorem]{Corollary}
\newtheorem{lemma}[theorem]{Lemma}
\newtheorem{proposition}[theorem]{Proposition}
\theoremstyle{remark}
\newtheorem{remark}[theorem]{Remark}
\def\Hom{\mathrm{Hom}}
\def\makeop#1{\expandafter\def\csname#1\endcsname
  {\mathop{\rm #1}\nolimits}\ignorespaces}
\newcommand{\Z}{\mathbb Z}
\newcommand{\Q}{\mathbb Q}
\newcommand{\C}{\mathbb C}
\newcommand{\D}{\mathcal D}    
\renewcommand{\O}{\mathcal O} 
\newcommand{\Ainf}{{A_{\mathrm{inf}}}} 
\newcommand{\Acris}{A_{\mathrm{cris}}} 
\newcommand{\fS}{\mathfrak{S}}
\newcommand{\e}{\mathfrak e}
\newcommand{\cris}{\mathrm{cris}}
\newcommand{\calR}{{\mathcal R}}
\newcommand{\gr}{\mathrm{gr}}
\newcommand{\tor}{\mathrm{tor}}
\DeclareSymbolFontAlphabet{\mathbb}{AMSb} 
\DeclareSymbolFontAlphabet{\mathbbl}{bbold}
\newcommand{\M}{\mathfrak M}
\newcommand{\dR}{\mathrm{dR}}
\newcommand{\ev}{\mathrm{ev}}
\newcommand{\colim@}[2]{%
  \vtop{\m@th\ialign{##\cr
    \hfil$#1\operator@font colim$\hfil\cr
    \noalign{\nointerlineskip\kern1.5\ex@}#2\cr
    \noalign{\nointerlineskip\kern-\ex@}\cr}}%
}
\newcommand{\colim}{%
  \mathop{\mathpalette\colim@{\rightarrowfill@\textstyle}}\nmlimits@
}
\newcommand*{\da@rightarrow}{\mathchar"0\hexnumber@\symAMSa 4B }
\newcommand*{\da@leftarrow}{\mathchar"0\hexnumber@\symAMSa 4C }
\newcommand*{\xdashrightarrow}[2][]{%
  \mathrel{%
    \mathpalette{\da@xarrow{#1}{#2}{}\da@rightarrow{\,}{}}{}%
  }%
}
\newcommand{\xdashleftarrow}[2][]{%
  \mathrel{%
    \mathpalette{\da@xarrow{#1}{#2}\da@leftarrow{}{}{\,}}{}%
  }%
}
\newcommand*{\da@xarrow}[7]{%
  \sbox0{$\ifx#7\scriptstyle\scriptscriptstyle\else\scriptstyle\fi#5#1#6\m@th$}%
  \sbox2{$\ifx#7\scriptstyle\scriptscriptstyle\else\scriptstyle\fi#5#2#6\m@th$}%
  \sbox4{$#7\dabar@\m@th$}%
  \dimen@=\wd0 %
  \ifdim\wd2 >\dimen@
    \dimen@=\wd2 %
26

  \fi
  \count@=2 %
  \def\da@bars{\dabar@\dabar@}%
  \@whiledim\count@\wd4<\dimen@\do{%
    \advance\count@\@ne
    \expandafter\def\expandafter\da@bars\expandafter{%
      \da@bars
      \dabar@ 
    }%
  }%
  \mathrel{#3}%
  \mathrel{%
26

    \mathop{\da@bars}\limits
    \ifx\\#1\\%
    \else
      _{\copy0}%
    \fi
    \ifx\\#2\\%
    \else
      ^{\copy2}%
    \fi
  }%
  \mathrel{#4}%
}
\begin{document}

\author{Tong Liu}
\address[Tong Liu]{Department of Mathematics, Purdue University, 150 N. University Street, West Lafayette, Indiana 47907, USA}
\email{tongliu@math.purdue.edu}

\title{Torsion graded pieces of Nygaard filtration for crystalline representation}
\maketitle
\begin{center}
\dedicatory{Dedicated to the memory of my master advisor Xianke Zhang}    
\end{center}

\begin{abstract} Let $K$ be a unramified $p$-adic field with the absolute Galois group $G_K$ and $T$ a crystalline $\Z_p$-representation of $G_K$. 
We study the graded pieces of integral filtration on $D_\dR(T)$ given by Nygaard filtration of the attached Breuil-Kisin module of $T$. We show that the $i$-graded piece has nontrivial $p$-torsion only if $ i =  r_j +m p$ for a Hodge-Tate weight  $ r_j$ of $T$ and $m$ a positive integer.   
\end{abstract}
\tableofcontents

\section{Introduction}  Let $\kappa$ be a perfect field with $ {\rm char} (\kappa) = p > 0$,  $ \O_K = W(\kappa) $ and $ K= \O_K [\frac 1 p]$. Denote $ G_K : = {\rm Gal} (\overline K/K)$,  $ \fS: = W(\kappa)[\![u]\!]$ and $E(u)= u -p$. Then $ (\fS, (E))$ is a Breuil-Kisin prism if Frobenius $ \varphi$ on $ \fS$ is given by $ \varphi (u) = u^p$.  Let $T$ be a crystalline finite free $\Z_p$-representation of $G_K$ with Hodge-Tate weights $  r_1 , \dots,  r_d$ so that 
$r_1 = 0$, $r_i \leq  r_{i+1} , \forall i = 1, \dots , d-1$ and $h: = r_d$. Let $ \M $ be the Kisin module attached to $T$ (\cite{KisinFcrystal}) and $ \M ^* : = \fS \otimes_{\varphi, \fS }\M$. It is known (see \S \ref{sec:overconvergentsheaf} for more details) that $ \M ^*$ admits Nygaard filtration $ \Fil^i \M^* \subset \M^*$ so that  $ (\M^* / E \M^*) [\frac 1 p]\simeq D_\dR (T) $ and $\ev _p  (\Fil^i \M^*[\frac 1 p]) = \Fil^i D_\dR (T)$ where  $ \ev_p : \M^*[\frac 1 p] \to (\M^* / E \M^*) [\frac 1 p] \simeq D_\dR (T)$ is the projection. Set $ M : = \M^* / E\M^* = \ev_p (\M^*)\subset D_\dR (T)$ and $ \gr^i M := \ev_p (\Fil^i \M^*)/\ev_p (\Fil^{i+1}\M^*). $ Let $ (\gr^iM)_{\tor}$ denote the torsion part of $ \gr^i M$. 
This paper aims to prove the following result. 

\begin{theorem}\label{Thm-1} $\{i | (\gr^i M)_{\tor} \not = 0 \} \subset \{  r_j + m p|  j= 1, \dots , d,  m >0  , r_j + m p \leq h   \}.  $
\end{theorem}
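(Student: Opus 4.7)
The plan is to translate torsion in $\gr^i M$ into a relation inside $\M$ via the linearized Frobenius $\varphi_\M : \M^* \to \M$. A non-trivial torsion class at level $i$ is witnessed by $\tilde x \in \Fil^i \M^*$ with $\tilde x \notin \Fil^{i+1}\M^* + E\M^*$ but $p^n \tilde x = \tilde y + E \tilde z$ for some $\tilde y \in \Fil^{i+1}\M^*$ and $\tilde z \in \M^*$. Applying the $\fS$-linear map $\varphi_\M$ to both sides and writing $\varphi_\M(\tilde x) = E^i \vec u$ and $\varphi_\M(\tilde y) = E^{i+1}\vec v$, one obtains $p^n E^i \vec u = E^{i+1}\vec v + E\,\varphi_\M(\tilde z)$. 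Since $\M$ is $E$-torsion-free, dividing by $E$ yields $\varphi_\M(\tilde z) = E^{i-1}(p^n \vec u - E\vec v)$, so $\tilde z \in \Fil^{i-1}\M^*$.

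Next I would introduce the $\O_K$-linear injection $\psi_j : \gr^j_N \M^* \hookrightarrow \M/E\M$ defined by $\tilde w \mapsto \varphi_\M(\tilde w)/E^j \bmod E$; the relation above becomes $\psi_{i-1}(\bar{\tilde z}_N) = p^n \psi_i(\bar{\tilde x}_N)$ in $\M/E\M$. The $K$-dimension of $\im \psi_j[\tfrac1p]$ equals $\#\{k : r_k \leq j\}$, so these images form an increasing filtration of $\M/E\M$ whose jumps occur exactly at the Hodge--Tate weights. In particular, torsion in $\gr^i M$ requires a $p^n$-divisibility relation between $\im \psi_i$ and $\im \psi_{i-1}$, which rationally cannot happen unless $i$ is a Hodge--Tate weight or $\im \psi_{i-1}$ is integrally strictly smaller than $\im \psi_i$.

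From here I would iterate this one-step descent: each iteration drops the filtration index by one while picking up a $p^n$-divisibility relation. The $p$-shift in the theorem comes from combining this descent with the identity $\varphi(E) = p u_1 + E\beta$ in $\fS$ (with $u_1 \in \fS^\times$), which trades an $E$-factor for a $p$-factor; iterating the trade $p$ times effectively yields a shift of $p$ in the filtration index, and the descent must terminate precisely when one reaches a Hodge--Tate weight, forcing $i = r_j + mp$ for some $j$ and $m \geq 1$. The main obstacle will be the rigorous bookkeeping required to show that the one-step $E$-descent, combined with the $p$-versus-$E$ trade from $\varphi(E)$, pins down the admissible torsion positions to exactly $\{r_j + mp : r_j + mp \leq h\}$; this is where the overconvergent sheaf structure from \S\ref{sec:overconvergentsheaf} should enter, allowing precise control of the integral lattices $\im \psi_j$ and the location of their jumps.
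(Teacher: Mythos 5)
Your opening reduction is correct and clean: setting $\psi_j(\bar w)=(1\otimes\varphi_\M)(w)/E^j \bmod E$ gives injections $\gr^j\M^*\hookrightarrow \M/E\M$ with $\psi_j(\overline{Ew})=\psi_{j-1}(\bar w)$, so the images form an increasing chain of $\O_K$-submodules of $\M/E\M$ and one checks $\gr^iM\cong \im\psi_i/\im\psi_{i-1}$ (this is essentially Lemma \ref{lemma-basic}(4) repackaged). But this is a restatement of the problem, not a proof: it tells you that $(\gr^iM)_\tor\neq 0$ iff the chain $\im\psi_{i-1}\subsetneq\im\psi_i$ jumps integrally without jumping rationally, and gives no constraint whatsoever on which $i$ this can happen at. The entire content of the theorem is the congruence $i\equiv r_j\pmod p$ with $i>r_j$, and your proposed mechanism for it does not work as stated: since $1\otimes\varphi_\M$ is $\fS$-linear on $\M^*=\fS\otimes_{\varphi,\fS}\M$, the identity $\varphi(E)=p\cdot(\text{unit})+E\beta$ never enters the relation between $\psi_j$ and $\psi_{j-1}$ (one has $\varphi_\M(Ew)=E\varphi_\M(w)$, not $\varphi(E)\varphi_\M(w)$), and the claim that ``iterating the trade $p$ times yields a shift of $p$'' and that ``the descent must terminate at a Hodge--Tate weight'' is asserted, not argued. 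A purely $\varphi$-linear-algebra route is exactly the Gee--Kisin circle of ideas (Theorem \ref{thm-GK}), and the paper explicitly notes that it is unclear how to deduce Theorem \ref{Thm-1} from it unless $h\le p$.

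The paper's actual source of the mod-$p$ condition is the monodromy operator $\nabla$ on the Breuil module $\D=S[\frac1p]\otimes_K D$, not the Frobenius. The two key inputs are: (i) the integrality statement $\nabla(\Fil^i\M^*)\subset S_p\otimes_\fS\Fil^{i-1}\M^*$ (Lemma \ref{lem-nabla-integral}), proved via the $\tau$-action on $\Ainf\otimes_\fS\M$ and the expansion of $\nabla$ in terms of $(\tau-1)^n$; and (ii) an inductive construction (Proposition \ref{prop-constuct}) of bases of $\Fil^{i+1}\M^*$ of the form $\beta_j=\e_j^{(i)}+\sum_{l,k}x_{lk}E^{i-l}\e_k^{(l)}$, where membership in $\Fil^{i+1}$ is tested by $\nabla(\beta_j)\in\Fil^i\D$ (Griffiths transversality). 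Since $\nabla(x_{lk}E^{i-l}\e_k^{(l)})=(i-l)x_{lk}E^{i-l-1}\e_k^{(l)}+\cdots$, the correction coefficients $x_{lk}$ can be solved in $\O_K$ precisely when $i-l$ is a $p$-adic unit for all relevant $l$ in the recursively defined set $J$; the failure locus of this invertibility is exactly $\mathcal J=\{r_j+mp:\ m>0\}$. If you want to complete your argument you would need to import this $\nabla$-integrality and Griffiths-transversality input; the Frobenius-only bookkeeping you outline will not, on its own, locate the torsion.
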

Write $ \Fil^i M : = \ev_p (\Fil^i \M^*)\subset D_\dR(T)$. It is known that $ \Fil^i M [\frac 1 p] = \Fil^i D_\dR(T)$. Thus $ \gr^i M [\frac 1 p]= \gr ^i D_\dR (T)$ which is nonzero if and only if $ i \in \{r_j, j =1 , \dots, d\}$. But nontrivial $ (\gr^iM)_\tor$ could appear beyond $\{r_j\}$. 
So the theorem provides a restriction of $i$ so that $ (\gr^iM)_\tor\not = 0$.   
It turns out that $ (\gr^iM)_\tor$ strongly relates to the shape of $ \Fil^i \M^*$. We call $ \M^*$ has an \emph{adapted basis} if there exists an $ \fS$-basis $ e_1 \dots , e_d$ of $ \M ^*$ so that for each $i$,  $ \Fil^i \M^*$ has an $ \fS$-basis $ \{ E^{a_{ij}} e_j\}$ where $ a_{ij} = \max\{0, i- \max\{\ell| e_j \in \Fil^{\ell}\M^* \setminus \Fil^{\ell +1} \M^*\} \}$.  
 
\begin{proposition}Assume that $(\gr^i M)_\tor= 0 $  for $ i \leq h-1$. Then $\M^*$ has an adapted basis.     
\end{proposition}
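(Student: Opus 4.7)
My plan is to first produce an $\O_K$-basis of $M$ adapted to the filtration on $M$, lift it to an $\fS$-basis of $\M^*$, and then use a key injectivity property of multiplication by $E$ on the Nygaard filtration to show that this lift is adapted.

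First, $M = \M^*/E\M^*$ is finite free of rank $d$ over $\O_K = \fS/(E)$, so every $\Fil^i M$ is torsion-free. Because $\Fil^i M[\tfrac{1}{p}] = \Fil^i D_\dR(T) = 0$ for $i > h$, we actually have $\Fil^i M = 0$ for $i > h$, so $\gr^h M = \Fil^h M$ is torsion-free automatically. Combined with the hypothesis, every $\gr^i M$ is free over $\O_K$, and each short exact sequence
\[ 0 \to \Fil^{i+1} M \to \Fil^i M \to \gr^i M \to 0 \]
splits. A descending induction on $i$ then yields an $\O_K$-basis $\bar e_1, \dots, \bar e_d$ of $M$ in which each $\bar e_j$ has a weight $w_j := \max\{\ell \mid \bar e_j \in \Fil^\ell M\}$ and $\Fil^i M = \bigoplus_{w_j \geq i} \O_K \bar e_j$.

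Next I would lift each $\bar e_j$ to $e_j \in \Fil^{w_j}\M^*$ using $\ev_p(\Fil^{w_j}\M^*) = \Fil^{w_j} M$. Since the $\bar e_j$ further reduce to a $\kappa$-basis of $\M^*/(u,p)\M^* = M/pM$, Nakayama's lemma shows $e_1, \dots, e_d$ is an $\fS$-basis of $\M^*$. The weight of $e_j$ in $\M^*$ is exactly $w_j$: if $e_j \in \Fil^{w_j+1}\M^*$ then $\bar e_j = \ev_p(e_j) \in \Fil^{w_j+1} M$, contradicting the choice of $w_j$.

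The main step is to prove $\Fil^i \M^* = \sum_j E^{a_{ij}} \fS e_j$ with $a_{ij} = \max(0, i - w_j)$. The inclusion $\supset$ is immediate, and for $\subset$ I plan an induction on $i$ (trivial base $i=0$). The essential tool is the identity
\[ \Fil^i \M^* \cap E\M^* = E \cdot \Fil^{i-1} \M^*, \]
which follows from the standard description $\Fil^i \M^* = \Phi^{-1}(E^i \M)$, where $\Phi : \M^* \to \M$ is the $\fS$-linearization of Frobenius: if $Ex \in \Fil^i\M^*$ then $E\,\Phi(x) \in E^i\M$, and the $E$-torsion-freeness of $\M$ gives $\Phi(x) \in E^{i-1}\M$, i.e.\ $x \in \Fil^{i-1}\M^*$. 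Given $z \in \Fil^i\M^*$, I lift the $\O_K$-coordinates of $\ev_p(z) \in \bigoplus_{w_j \geq i}\O_K\bar e_j$ to $\fS$ and subtract the corresponding combination of the $e_j$ (which lies in $\sum_j E^{a_{ij}}\fS e_j$, since $a_{ij}=0$ when $w_j \geq i$); this reduces to $z \in \Fil^i\M^* \cap E\M^* = E\,\Fil^{i-1}\M^*$. Writing $z = Ez'$ with $z' \in \Fil^{i-1}\M^*$ and applying the inductive hypothesis places $z$ in the desired submodule, since $\max(0,i-1-w_j)+1 \geq \max(0,i-w_j) = a_{ij}$ for every $j$. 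The main obstacle is isolating and establishing this injectivity identity; once it is in hand, everything else is bookkeeping combining Nakayama's lemma with the splitting of the filtration on $M$.
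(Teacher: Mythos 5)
Your proof is correct and follows essentially the route the paper defers to: the paper reduces to a saturation statement for $\Fil^i M$ and then cites \cite[Prop.\ 4.5]{GLS-1}, while you reconstruct that argument directly by splitting the filtration on $M$ (using freeness of the $\gr^i M$, including the automatic cases $i\ge h$), lifting to an $\fS$-basis via Nakayama, and inducting with the identity $\Fil^i\M^*\cap E\M^*=E\,\Fil^{i-1}\M^*$, which is exactly the content of Lemma \ref{lemma-basic}(4) and its proof in the paper. In short, this is a faithful, self-contained expansion of the proof the paper refers to rather than a different method.
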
 
\begin{proof} It is easy to check that  $\gr^i M$ has no nontrivial $p$-torsion for $ i \leq h-1$ if and only if $ \Fil^i M$ is saturated in $ M$, namely $ \Fil^i M = M \cap \Fil ^i M [\frac 1 p]$. Then the proposition follows the same proof of \cite[Prop4.5]{GLS-1}.     
\end{proof}
By Theorem \ref{Thm-1}, if $ h \leq p$ then  $\gr^i$ has no $p$-torsion for $ i \leq h-1$. Now we recover one of the main technical results in \cite[Cor. 4.19]{GLS-1}
 \begin{corollary} If $h \leq p$ then $\M^*$ has an adapted basis.     
 \end{corollary}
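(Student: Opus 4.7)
The plan is simply to chain the two results that immediately precede this Corollary: Theorem \ref{Thm-1} restricts the indices at which nontrivial $p$-torsion can occur in $\gr^i M$, while the Proposition converts the absence of torsion in the range $i \leq h - 1$ into the existence of an adapted basis. The whole task reduces to checking that the hypothesis $h \leq p$ is sharp enough to prevent Theorem \ref{Thm-1}'s exceptional indices from entering that range.

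More precisely, I would first invoke the Proposition to reduce the claim to showing that $(\gr^i M)_{\tor} = 0$ for every integer $i$ with $0 \leq i \leq h - 1$. I would then apply Theorem \ref{Thm-1}: any index $i$ at which $(\gr^i M)_{\tor}$ is nonzero must have the form $i = r_j + m p$ for some Hodge-Tate weight $r_j \geq 0$ and some positive integer $m \geq 1$. Since $r_j \geq 0$ and $m \geq 1$, any such $i$ satisfies the simple lower bound $i \geq m p \geq p$.

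Now, the hypothesis $h \leq p$ forces any $i$ in the range $0 \leq i \leq h - 1$ to satisfy $i \leq p - 1 < p$, which contradicts the bound $i \geq p$ obtained above. Hence no index in that range can carry nontrivial torsion, so $(\gr^i M)_{\tor} = 0$ for all $i \leq h-1$, and the Proposition produces the desired adapted basis.

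I do not foresee any real obstacle: all of the content has been absorbed by Theorem \ref{Thm-1} and the Proposition, and the Corollary is essentially a comparison of two numerical ranges. The only thing to be careful about is the strict inequality $h - 1 < p$ versus the inequality $r_j + mp \geq p$; as long as one observes that $m$ is strictly positive (so $mp$ is at least $p$, not $0$), the argument closes.
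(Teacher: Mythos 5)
Your proof is correct and is exactly the argument the paper intends (the paper only sketches it in the sentence preceding the Corollary): combine Theorem~\ref{Thm-1}'s constraint $i = r_j + mp$ with $m \geq 1$, note this forces $i \geq p \geq h > h-1$, so $(\gr^i M)_{\tor} = 0$ for $i \leq h-1$, and apply the Proposition.
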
   
By \cite[Example 6.8]{GLS-1}, there exists a crystalline representation $T$ of $G_{\Q_p}$ with Hodge-Tate weights $ \{0, p+1\}$ so that $ \M^*$ has no adpated basis. This implies that $ (\gr^p M)_\tor\not = 0$.  

 Our result is motivated by the following theorem of Gee and Kisin: Let $ \overline \M: = \M / p\M$. Then for a $\kappa[\![u]\!]$-basis $ \bar e_1 , \dots , \bar e_d$ of $\overline{\M}$, we have $$ \varphi_{\overline \M} (\bar e_1 , \dots , \bar e_d) =  (\bar e_1 , \dots , \bar e_d) X \Lambda Y,  $$
where $X, Y \in \GL_d (\kappa[\![u]\!])$ are invertible matrix and $\Lambda$ is a diagonal matrix with $ u ^{a_j}, j =1 , \dots , d $ on the diagonal. It is known that $ 0\leq a_j \leq h . $
\begin{theorem}[Gee-Kisin]\label{thm-GK}Notations as the above, $ \{a_i \mod p \}= \{r_i \mod p\} $ as multisets. 
\end{theorem}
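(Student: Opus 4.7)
The plan is to establish the multiset equality $\{a_j\bmod p\}=\{r_j\bmod p\}$ by identifying both sides with the ``mod-$p$ Hodge polygon'' of $\overline{\M}$, viewed as an invariant of the mod-$p$ Galois representation $\bar T=T/pT$.

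By definition of Smith normal form over the PID $k[\![u]\!]$, the multiset $\{a_j\}$ \emph{is} the Hodge polygon of $\bar\varphi$, i.e., the elementary divisors of $\overline{\M}/\bar\varphi(\overline{\M}^*)$. A first sanity check is the sum identity: $\det\varphi:\det\M^*\to\det\M$ equals multiplication by $E^{\sum_j r_j}$ up to a unit in $\fS^\times$ (by Kisin's theory on HT weights), so modulo $p$ we get $\det\bar\varphi=u^{\sum a_j}$ up to a unit, and hence $\sum a_j=\sum r_j$. This is the ``determinant'' layer of the argument, which falls out immediately from $E\equiv u\pmod p$.

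For the full multiset statement, one identifies $\{r_j\bmod p\}$ as the mod-$p$ Hodge polygon of $T$: the mod-$p$ Breuil--Kisin module $\overline{\M}$ classifies the restriction $\bar T|_{G_{K_\infty}}$ (where $K_\infty=K(p^{1/p^\infty})$ is the Kummer tower), and under this classification the Smith form exponents $\{a_j\}$ correspond to the mod-$p$ Hodge--Tate weights of $T$. For $h<p$, this is essentially Fontaine--Laffaille theory; in general, one must use Breuil modules or Kisin--Liu's refinement. The mod-$p$ HT weights are then $\{r_j\bmod p\}$ by the compatibility of the Breuil--Kisin functor with the integral HT weights of $T$; combined with the Smith form reading of the Hodge polygon of $\bar\varphi$, this yields the claimed multiset equality.

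The main obstacle is the identification in the third step when $h\geq p$: the mod-$p$ Hodge polygon in this range is harder to define (Fontaine--Laffaille does not apply directly) and requires a careful use of the Breuil or Kisin--Liu classification. The sum identity via determinants is straightforward, but the full multiset refinement (i.e., reading off \emph{all} the elementary divisors, not just their sum) depends on the detailed compatibility of the Breuil--Kisin functor with mod-$p$ reductions, which is the technical core of Gee--Kisin's argument.
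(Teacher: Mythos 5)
The paper does not prove Theorem~\ref{thm-GK}: it is stated with the attribution ``Gee--Kisin'' and cited as a known external result, with the surrounding text merely remarking that it is motivationally related to (but not obviously equivalent to) Theorem~\ref{Thm-1}. So there is no in-paper proof to compare against; I can only assess your sketch on its own merits.

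The sketch has a genuine gap precisely at the step that matters. Your first two paragraphs are fine: the identification of $\{a_j\}$ with the elementary divisors of $\overline{\M}/\bar\varphi(\overline{\M}^*)$ is just Smith normal form, and since $\det\varphi_\M\sim E^{\sum_j r_j}$ up to a unit in $\fS^\times$ and $E\equiv u\pmod p$, reduction mod $p$ gives $\sum_j a_j=\sum_j r_j$ (as integers, in fact). But this only controls the sum. Your third paragraph, which is where the actual multiset statement must be established, asserts that ``under this classification the Smith form exponents $\{a_j\}$ correspond to the mod-$p$ Hodge--Tate weights of $T$'' and then concludes. This is circular: there is no independently defined ``mod-$p$ Hodge polygon'' attached to $\bar T$ that is both (a) visibly equal to $\{a_j\bmod p\}$ from the Smith form side and (b) visibly equal to $\{r_j\bmod p\}$ from the crystalline side; establishing that compatibility \emph{is} the theorem. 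Outside the Fontaine--Laffaille range $h<p$, neither $\overline\M$ nor $\bar T$ remembers $\{r_j\}$ in any naive way --- a given $\bar T$ admits crystalline lifts of many different weights, and the mod-$p$ ``weights'' of $\bar T$ (in the sense of Serre weights or Hodge--Tate--Sen weights of $\bar T\otimes\Cp$) are a priori a different multiset from $\{r_j\bmod p\}$. The content of Gee--Kisin is exactly that the Smith exponents of \emph{any} Kisin module of a crystalline lift are constrained mod $p$ by that lift's weights, and your sketch invokes this as if it were a formal compatibility rather than proving it. You correctly flag that this is ``the technical core of Gee--Kisin's argument,'' but having flagged it, the sketch no longer constitutes a proof: it reduces the theorem to itself.
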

After the first version of this paper, the author and Hui Gao (\cite{gao2025integralsentheoryintegral}) found the relationships between Theorem \ref{Thm-1} and Theorem \ref{thm-GK}: Both of them are consequences of the \emph{Sen operator} $\Theta$ on $ \M_{\rm HT}: = \M / E \M$. Indeed, Gee and Kisin explained to the author that their method used in Theorem \ref{thm-GK}, which is based on the theory of various stacks in \cite{Bha22} and \cite{BL-b},  can also prove Theorem \ref{Thm-1}. Later Dat Pham \cite{Pham} provided another proof of Theorem \ref{Thm-1} via the Sen operator and the stacky's approach. 
See \cite[\S 1]{gao2025integralsentheoryintegral} for details of the relationships of these papers. 

Instead of using the Sen operator, the strategy of this note is to compute generators of $\Fil^h \M^*$ by the \emph{Griffith transversality} of $ \nabla$ (see Lemma \ref{lem-nabla-integral}).  This allows a much more explicit description of $ \Fil^h \M^*$, which is expected to be crucial, for example, to study the crystalline deformation ring in the future work.



\medskip
\noindent
\textbf{Acknowledgments.}  The author dedicates this paper to the memory of Prof. Xianke Zhang, who was the author's master advisor and introduced the author to the study of algebraic number theory, 

We would like to thank Bargav Bhatt, Dat Pham, Hui Gao and Toby Gee for the discussion and useful comments. 
The author prepared this paper during
his visit in the Institute for Advanced Study and would
like to thank IAS for their hospitality. He is supported by the Shiing-Shen Chern
Membership during his stay at IAS. 

\section{Construction of basis of $\Fil^i \frak{M}^*$}\label{sec:overconvergentsheaf}
\subsection{Preliminary on Nygaard filtration}Recall  that $K = W(\kappa)[\frac 1 p]$ is unramified with $\kappa$ the perfect residue field and ${\rm char}(\kappa) = p>0$. Let $T$ be a crystalline finite free $\Z_p$-representation of $G_K$ with Hodge-Tate weights  $[r_1 , \dots ,  r_d]$, where $ 0 = r_1$, $r_i \leq r_{i +1}$, $ r_d = h >0$ and $d = \dim _{\Q_p} T[\frac 1 p]$.  Let $D = D_\cris (T [\frac 1 p])$ be the filtered $\varphi $-module attached to $T [\frac 1 p]$. Since $K$ is unramifed, we have $ D_{\rm dR} (T [\frac 1 p]) \simeq D$. Write $ \gr ^i D : = \Fil^i D / \Fil^{i +1} D$. Then the set of Hodge-Tate weights ${\rm HT}(T) = \{r_j, 1 \leq j \leq d\} = \{ i \in \Z | \gr ^iD \not = 0 \}$. Let $ (\M, \varphi_\M ) $ be the Breuil-Kisin module attached to $ T$ and write $\M ^* : = \fS \otimes _{\varphi, \fS } \M$. Fix $E = u - p \in \fS$. 
The \emph{Nygaard filtration} of $\M^*$ is defined by 
\begin{equation}\label{Eq-Nygggard}
    \Fil^i \M^* : = \{ x\in \M ^* | (1 \otimes \varphi_{\M}) (x) \in E^i \M \},  
\end{equation}
and set $\gr^i \M^* : = \Fil^i \M ^* / \Fil^{i +1}\M^* $. 

In the following, we review the theory of \emph{Breuil} module which is very useful to understand Nygaard filtration $\Fil^i \M^*$. We refer readers to  \cite[\S 2]{BLL} for more details. 
Let $ S :  = \fS [\![\frac{E^p}{p}]\!]$ and $ \hat S_E $ be the $E$-completion of $W(\kappa)[u][\frac 1 p]$. For any subring $A \subset \hat S _E$, set $ \Fil ^i A : = A \cap E^i \hat S_E $. Clearly, $ \Fil ^i \fS = E^i \fS$, $\Fil^i  S[\frac 1 p] = E^i S [\frac 1 p]$ (but it is not true that $\Fil ^i  S = E^i  S$). Extends Frobenius $\varphi$ on $W(\kappa)$ to $ \fS$ and $ S$ by $ \varphi (u) = u ^p$. 
Set $\nabla :  S \to S$ (resp. $ N : S \to S$) by $ \nabla (f) = \frac{\partial f}{\partial u}$ (resp. $N(f) = \frac{\partial f}{\partial u} u$). The Breuil module over $S[\frac 1 p]$ (associated to $ D= D_\cris(T[\frac 1 p])$) is  $\D : =  S[\frac 1 p]\otimes_K D $, which has following structures: 
\begin{itemize}
    \item Frobenius $\varphi_\D: = \varphi_S \otimes \varphi_D; $
    \item monodromy operator $ \nabla : \D \to \D$ (resp. $N_\D : \D \to \D$) by $ \nabla (s \otimes x) = \nabla (s) \otimes x$ (resp. $ N_\D (x) = N(a) \otimes x + a \otimes N(x)$ \footnote{$N=0$ in the crystalline case considered here.}) where $ s \in S$ and $ x \in D$. Note that $ N_\D= u \nabla $; 
    \item projection map $ \ev_p : \D \to D$ defined by $ \ev_p (s \otimes a) = s(p) a$;  
    \item Filtration $\Fil^i \D \subset \D$ constructed inductively: $ \Fil ^0 \D = \D$ and 
 \begin{equation}
     \Fil^{i + 1} \D : = \{ x \in \Fil^i \D | \nabla (x) \in \Fil^{i}\D, \  \ev_p (x) \in \Fil^{i +1} D\}, 
\end{equation}
\end{itemize}

By the comparison of Kisin module and Breuil module, there exists a canonical isomorphism $\iota:  S[\frac 1 p] \otimes _\fS \M ^* \simeq \D =  S [\frac 1 p ] \otimes_K D $ so that 
\begin{enumerate}
    \item $\iota$ is compatible with $\varphi$-actions on both sides; 
    \item $\iota$ is compatible with filtration on both sides in the  sense that $\iota ( \Fil^i ( S[\frac 1 p] \otimes _\fS \M ^* )) = \Fil ^i \D  $ and filtration on the left sides are defined as follows:  
    \[ \Fil ^i (S [\frac 1 p] \otimes _\fS \M^* ) = \Fil ^i (S [\frac 1 p] \otimes _ {\fS , \varphi}\M): = \{x \in  S [\frac 1 p] \otimes _ {\fS , \varphi}\M| (1\otimes \varphi_\M) (x) \in \Fil^i S [\frac 1 p] \otimes _\fS \M  \}.  \]
\end{enumerate}
So in the following, we identify $  S [\frac 1 p] \otimes_\fS \M ^* $ with $ \D$ via $ \iota$. Consequently, we regard $\M^* $ as $\fS$-submodule of $\D$ and clearly $ \Fil^i \M ^* \subset \Fil^i \D$.  The following lemma collects basic facts on $\Fil^i \M^*$ and $\Fil^i \D$. 
\begin{lemma}\label{lemma-basic} \begin{enumerate}
\item $ \Fil^i \M ^* $ generates $ \Fil^i \D$ as $ S[\frac 1 p]$-module and $ \Fil^i \D \cap \M^* = \Fil^i \M^*$. 
\item For each $i$, $ \Fil^i \M^*$ is a finite free $ \fS$-submodule of $\M^* $ and $ \gr^i \M^*$ is a finite free $ \O_K$-module. 
    \item For each $i$, $E \Fil^{i -1 }\M^* \subset \Fil^i \M^*$ and $E \Fil^{i -1 }\D \subset \Fil^i \D$. 
    \item The map $ \Fil^i \M^* \subset \D \overset{\ev_p}{\to} D $ induces the following short exact sequence 
    \[ 0 \to E \Fil^{i-1} \M^* \to \Fil^i\M^* \overset{\ev_p}{\longrightarrow} \ev_p (\Fil^i \M^*)\to 0. \]
\end{enumerate}
\end{lemma}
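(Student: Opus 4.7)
My plan is to handle the four parts in the order (3), (4), (2), (1), since later parts depend on earlier ones. Throughout I use the $\fS$-linear characterization $\Fil^i\M^* = \{x\in\M^* : (1\otimes\varphi_\M)(x)\in E^i\M\}$ together with the analogous description of $\Fil^i\D = \{x\in\D : (1\otimes\varphi)(x)\in E^iS[\tfrac 1 p]\otimes_\fS\M\}$ provided by the comparison $\iota$.

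Part (3) is immediate from the $\fS$-linearity of $1\otimes\varphi_\M$: if $(1\otimes\varphi_\M)(x)\in E^{i-1}\M$ then $(1\otimes\varphi_\M)(Ex)=E\cdot(1\otimes\varphi_\M)(x)\in E^i\M$, and the same argument handles the $\D$-version. For part (4), the kernel of $\ev_p\colon\D\to D$ is $E\D$ since $S[\tfrac 1 p]/E\cdot S[\tfrac 1 p]\simeq K$, so the kernel of $\ev_p|_{\Fil^i\M^*}$ equals $\Fil^i\M^*\cap E\D$. The inclusion $E\Fil^{i-1}\M^*\subseteq\Fil^i\M^*\cap E\D$ follows from (3). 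For the reverse, writing $x=Ey$ with $y\in\D$ and $x\in\Fil^i\M^*$, I would first argue $y\in\M^*$ by observing that $\D/\M^*\cong\bigoplus_j S[\tfrac 1 p]/\fS$ is $E$-torsion-free (which reduces to the injection $\fS/E\simeq\O_K\hookrightarrow K\simeq S[\tfrac 1 p]/E\cdot S[\tfrac 1 p]$), then conclude $y\in\Fil^{i-1}\M^*$ by cancelling $E$ in $E(1\otimes\varphi_\M)(y)\in E^i\M$ using that $\M$ is $\fS$-free hence $E$-torsion-free.

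Part (2): $\Fil^i\M^*$ is finitely generated over the Noetherian ring $\fS$ and has rank $d$ since $\Fil^i\M^*[\tfrac 1 E]=\M^*[\tfrac 1 E]$. To obtain freeness over the two-dimensional regular local ring $\fS$ I would use the Auslander--Buchsbaum formula and verify $\depth\Fil^i\M^*=2$: $p$ is a non-zero-divisor on $\M^*$ hence on $\Fil^i\M^*$, and $u$ remains regular modulo $p$ because $\Fil^i\M^*/p$ embeds into the $u$-torsion-free module $\M^*/p$. For $\gr^i\M^*$, applying (3) at $i$ and $i+1$ shows $E$ acts as zero, so $\gr^i\M^*$ is a finitely generated $\O_K$-module. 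Its $p$-torsion-freeness reduces to: if $x\in\Fil^i\M^*$ with $px\in\Fil^{i+1}\M^*$, then $p(1\otimes\varphi_\M)(x)\in E^{i+1}\M$, and since $\M/E^{i+1}\M$ is $p$-torsion-free (as $\fS/E^{i+1}\simeq\O_K[v]/v^{i+1}$ with $v=u-p$ is $\O_K$-free), we get $x\in\Fil^{i+1}\M^*$. A finitely generated torsion-free module over the DVR $\O_K$ is free.

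Part (1) is the main obstacle. The intersection identity $\Fil^i\D\cap\M^* = \Fil^i\M^*$ reduces to $E^iS[\tfrac 1 p]\otimes_\fS\M\cap\M = E^i\M$, which using an $\fS$-basis of $\M$ boils down to $E^iS[\tfrac 1 p]\cap\fS = E^i\fS$; this follows by iterating $\O_K\hookrightarrow K$ through the chain $\fS/E^i\hookrightarrow\fS[\tfrac 1 p]/E^i\isoto S[\tfrac 1 p]/E^iS[\tfrac 1 p]$. For the generation statement $S[\tfrac 1 p]\cdot\Fil^i\M^* = \Fil^i\D$, my approach is to use flatness of $S[\tfrac 1 p]$ over $\fS$: tensoring the short exact sequence $0\to\Fil^i\M^*\to\M^*\to\M^*/\Fil^i\M^*\to 0$ with $S[\tfrac 1 p]$ identifies $S[\tfrac 1 p]\otimes_\fS\Fil^i\M^*$ with the kernel of $\D\to S[\tfrac 1 p]\otimes_\fS(\M^*/\Fil^i\M^*)$, and a second application of flatness to the injection $\M^*/\Fil^i\M^*\hookrightarrow\M/E^i\M$ (coming from $1\otimes\varphi_\M$) identifies this kernel with $\Fil^i\D$. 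The hard step is justifying flatness of $S[\tfrac 1 p]$ over $\fS$, which I would do by placing $S[\tfrac 1 p]$ inside the $E$-adic completion $\fS[\tfrac 1 p]^\wedge_E\simeq K[\![E]\!]$ and appealing to standard flatness results for completions of Noetherian rings.
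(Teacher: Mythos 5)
Your proof is essentially correct, but you reprove everything from scratch whereas the paper outsources the key facts in (1) and (2) (the $\O_K$-freeness of $\gr^i\M^*$ and the identity $\Fil^i\D\cap\M^*=\Fil^i\M^*$) to \cite[Lem.\ 4.3]{GLS-1}, and the remaining work is done with more elementary tools. For (2) the paper does not invoke Auslander--Buchsbaum: it considers $0\to\ev_p(\Fil^{i+1}\M^*)\to\Fil^i\M^*/E\Fil^i\M^*\to\gr^i\M^*\to 0$, observes that the two outer terms are $p$-torsion-free and hence $\O_K$-free, concludes $\Fil^i\M^*/E\Fil^i\M^*$ is $\O_K$-free, and finishes by Nakayama using $E^i\M^*\subset\Fil^i\M^*\subset\M^*$. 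Your depth computation is a legitimate alternative, and your direct proof that $\gr^i\M^*$ and $\Fil^i\M^*/p$ are $p$- (resp.\ $u$-) torsion-free via the $p$-torsion-freeness of $\M/E^{i+1}\M$ is correct and makes the argument self-contained. For (1), the paper's route for the generation statement is element-level: strip off the $E^{\geq i}$-part of $x\in\Fil^i\D$ so the coefficients lie in $W(\kappa)[u][\tfrac1p]$, clear denominators to land in $\M^*$, apply $\Fil^i\D\cap\M^*=\Fil^i\M^*$, and divide by $p^n$ again. Your flatness approach is cleaner and in fact proves the stronger statement the paper relegates to Remark~2.2.

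There is one genuine gap, though, in the step you yourself flag as the hard one. Flatness of $S[\tfrac1p]$ over $\fS$ does \emph{not} follow from embedding $S[\tfrac1p]$ into $K[\![E]\!]$ and citing flatness of Noetherian completions: $S[\tfrac1p]$ is a proper subring of that completion, and a submodule of a flat module need not be flat. The conclusion is nevertheless true, and the correct justification is shorter than what you attempt: $S[\tfrac1p]$ is a torsion-free module over $\fS[\tfrac1p]=W(\kappa)[\![u]\!][\tfrac1p]$, which is a one-dimensional regular domain and a UFD, hence a PID; torsion-free modules over a PID are flat, and $\fS[\tfrac1p]$ is flat over $\fS$ as a localization. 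Alternatively, for the specific tensor computations you run, full flatness is overkill: it suffices that $\Tor_1^\fS(S[\tfrac1p],\O_K)=0$, i.e.\ that $E$ is a non-zero-divisor on $S[\tfrac1p]$, combined with ascending induction on $i$ through the sequences $0\to\Fil^{i+1}\M^*\to\Fil^i\M^*\to\gr^i\M^*\to 0$ with $\gr^i\M^*$ already known to be $\O_K$-free (this is in spirit what the paper's Remark~2.2 does, where the stated $\mathrm{Ext}^1$ should presumably read $\Tor_1$). Either repair closes the gap.
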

\begin{proof} The exactness of the sequence in (4) is equivalent to that $ E \M^* \cap \Fil^i\M^* = E \Fil^{i-1}\M^*$, and this easily follows that construction of $ \Fil ^i \M^* $ in \eqref{Eq-Nygggard}. 
(3) easily follows from the construction of $\Fil^i \M^*$ and $\Fil^i \D $.  \cite[Lem. 4.3]{GLS-1} proves that $ \gr ^i \M^*$ is finite $\O_K$-free and $ \Fil^i \D \cap\M^* = \Fil^i \M^*$. To see that $ \Fil^i \M^*$ is finite free, consider the exact sequence 
\[  0 \to \Fil^ {i +1 } \M/ E \Fil^i \M^* \to \Fil^i\M^* / E \Fil^i \M^* \to \Fil^i \M^* / \Fil^{i +1}\M^* \to 0  . \]
Since $\Fil^ {i +1 } \M/ E \Fil^i \M^* = \ev_p (\Fil^{i+1}\M^*)\subset D$ and $  \Fil^i \M^* / \Fil^{i +1}\M^* = \gr^i \M^*$ are torsion free and hence finite free $\O_K$-modules, we conclude that $ \Fil^i \M^* /E \Fil^i \M^*$ is finite free $\O_K$-module. Since $ E^i \M^* \subset \Fil^i \M^* \subset \M^*$ and $ \M^*$ is finite $\fS$-free, $ \Fil^i \M^*$ is finite $\fS$-free by NAK. 

To prove  (1), first note that $ E^i \D \subset \Fil^i \D$ and $ E^i \M^* \subset \Fil^i \M^*$. Now given a $ x= \sum_j f_j \otimes a_j  \in \Fil^i (S [\frac 1 p] \otimes_{\varphi, \fS}\M)$, with $ f_j \in S [\frac 1 p]$, $ a_j \in \M ^* $, by removing $ E^l $-term with $ l  \geq i$, we may assume that $ f_j \in W(\kappa) [u][\frac 1 p ]$. Therefore $p^n x\in \M^*$ for sufficient large $n$. Note that  $ p ^n x \in \Fil^i (S[\frac1 p]\otimes_{\varphi, \fS}\M^*)$, by construction, we have $ p ^n x \in \Fil^i \M^*$. Therefore, $ x $ is $ S [\frac 1 p]  \Fil^i \M^*\subset \Fil^i \D$ as required.   
\end{proof}
\begin{remark} \label{rmk-Tor}Indeed, $ \Fil^i \D \simeq S [\frac 1 p] \otimes_\fS \Fil^i \M^*$. To show this, it suffices to show that the natural map $ S[\frac 1 p] \otimes_\fS  \Fil^i \M^* \to \D\simeq S[\frac 1 p]\otimes_\fS \M^*$ is injective. By induction on $i$ and using exact sequence $0 \to \Fil^{i +1}\M^* \to \Fil^i \M^* \to \gr^i \M^* \to 0$, this follows that $ {\rm Ext}^1_\fS (S[\frac 1 p], \O_K)= 0$. 
    
\end{remark}

\subsection{The range of $\nabla (\Fil ^i \M^*)$. }



    
Set $ \Fil^i \M^*: = \M^*$  and $\Fil^i \D : = \D $ if $ i < 0$. The following Lemma, which can be regarded as integral Griffith transversality,  is important for later use. 
\begin{lemma}\label{lem-nabla-integral} $ \nabla (\Fil^i \M^*) \in  S_p \otimes _\fS \Fil^{i -1} \M ^*$ where $S_p : = \{f \in S [\frac 1 p]| \ev_p (f) \in W(\kappa)\}$. 
\end{lemma}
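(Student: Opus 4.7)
The plan is to combine Griffiths transversality $\nabla(\Fil^i\D)\subset \Fil^{i-1}\D$ (which is baked into the inductive definition of $\Fil^\bullet\D$ and also underlies Remark \ref{rmk-Tor}) with an induction on $i$, where the base case is verified by exploiting the Frobenius compatibility of the comparison isomorphism $\iota$.

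Combining Griffiths transversality with Remark \ref{rmk-Tor} immediately gives $\nabla(\Fil^i\M^*)\subset \Fil^{i-1}\D= S[\frac{1}{p}]\otimes_\fS \Fil^{i-1}\M^*$, so the lemma's content is to upgrade the $S[\frac{1}{p}]$-coefficients to $S_p$-coefficients. The critical base case, which I would handle first, is $i=0$ (with the convention $\Fil^{-1}\M^*:=\M^*$): namely, $\nabla(\M^*)\subset S_p\otimes_\fS\M^*$. Fix an $\fS$-basis $\{e_j\}$ of $\M^*$ and a $K$-basis $\{x_k\}$ of $D$, and write $e_j=\sum_k G_{jk}(1\otimes x_k)$ with $G\in \GL_d(S[\frac{1}{p}])$. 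Since $\{\ev_p(e_j)\}$ is an $\O_K$-basis of $M$ whose coordinates in $\{x_k\}$ form the rows of $G(p)$, the claim $\nabla(\M^*)\subset S_p\otimes_\fS\M^*$ is equivalent to $(\nabla G)(p)\cdot G(p)^{-1}\in M_d(\O_K)$, i.e.\ to $\ev_p(\nabla(e_j))\in M$ for each $j$. The Frobenius compatibility $\varphi_\D\circ\iota=\iota\circ\varphi_{\M^*}$ gives a functional equation on $G$ involving the $\fS$-integral Frobenius matrix of $\M$ and the Hodge filtration on $D$; differentiating this equation and specializing at $u=p$ expresses $\ev_p(\nabla(e_j))$ as an $\O_K$-linear combination of the $\ev_p(e_l)$, delivering the required integrality.

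For the inductive step, given $x\in\Fil^i\M^*$ with $i\geq 1$, use Lemma \ref{lemma-basic}(4) to write $x=\tilde z+E\tilde y$ with $\tilde y\in\Fil^{i-1}\M^*$ and $\tilde z\in\Fil^i\M^*$ a chosen lift of $\ev_p(x)$. By Leibniz,
\[
\nabla(x)=\tilde y + E\nabla(\tilde y)+\nabla(\tilde z).
\]
The first summand lies in $\Fil^{i-1}\M^*\subset S_p\otimes_\fS\Fil^{i-1}\M^*$. By induction $\nabla(\tilde y)\in S_p\otimes_\fS\Fil^{i-2}\M^*$, and then Lemma \ref{lemma-basic}(3), namely $E\Fil^{i-2}\M^*\subset\Fil^{i-1}\M^*$, upgrades the second summand to $E\nabla(\tilde y)\in S_p\otimes_\fS\Fil^{i-1}\M^*$. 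For the third summand, $\nabla(\tilde z)$ lies simultaneously in $S_p\otimes_\fS\M^*$ (base case applied to $\tilde z\in\M^*$) and in $\Fil^{i-1}\D$ (Griffiths transversality). Choosing $\tilde z$ as a distinguished lift adapted to the Nygaard filtration and the Frobenius structure on $\M^*$ then allows the intersection $(S_p\otimes_\fS\M^*)\cap \Fil^{i-1}\D$ to be sharpened to $S_p\otimes_\fS\Fil^{i-1}\M^*$, closing the induction.

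The main obstacle will be the base case: establishing the $\O_K$-integrality of the log-derivative $(\nabla G)G^{-1}$ at $u=p$. This rests on the precise way the $\fS$-integral Frobenius on $\M$ matches up with the filtered $\varphi$-module $D$ via $\iota$, and is where most of the technical work lives; once the base case is in hand, both the Leibniz calculation and the inductive bookkeeping are comparatively routine.
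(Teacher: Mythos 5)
Your proposal does not follow the paper's argument, and it has a genuine gap at its core. The paper proves the lemma for all $i$ simultaneously by Galois descent: it realizes $\nabla$ as the logarithm of the $\tau$-action via
\[
\nabla(x)=\sum_{n\geq 1}(-1)^{n-1}\frac{(\tau-1)^n}{n\,u([\underline\zeta^\flat]-1)}(x),
\]
shows $(\tau-1)^n(\Fil^i\M^*)\subset\Fil^{i-n}\M^*\otimes_\fS I^{[n]}$ by combining the stability of $\Ainf\otimes_\fS\Fil^i\M^*$ under $G_K$ with Griffiths transversality on the Breuil side, and then controls the divided-power denominators via $v_p(w)=\tfrac{1}{p-1}$. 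Your proposal instead attempts an induction on $i$ in which the base case $\nabla(\M^*)\subset S_p\otimes_\fS\M^*$ already carries essentially all the difficulty, and the inductive step does not close.

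The concrete problem is the step where you argue that $\nabla(\tilde z)\in(S_p\otimes_\fS\M^*)\cap\Fil^{i-1}\D$ can be ``sharpened'' to $S_p\otimes_\fS\Fil^{i-1}\M^*$ by a good choice of lift $\tilde z$. The intersection $(S_p\otimes_\fS\M^*)\cap\Fil^{i-1}\D$ is strictly larger than $S_p\otimes_\fS\Fil^{i-1}\M^*$: take $\M^*$ free of rank $2$ on $e_1,e_2$ with $\Fil^{i-1}\M^*=\fS e_1\oplus E\fS e_2$, and consider $y=\tfrac{E}{p}e_2$. One has $\tfrac{E}{p}\in S_p$ (its value at $u=p$ is $0$) and $y=\tfrac{1}{p}(Ee_2)\in S[\tfrac 1p]\otimes_\fS\Fil^{i-1}\M^*=\Fil^{i-1}\D$, yet $y\notin S_p\otimes_\fS\Fil^{i-1}\M^*$ because $\tfrac{1}{p}\notin S_p$. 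So the membership of $\nabla(\tilde z)$ in both ambient modules does not imply the desired integrality, and you give no construction of a ``distinguished lift'' $\tilde z$ together with a proof that $\nabla(\tilde z)$ is integral in the refined sense. Asking for this for an arbitrary $\tilde z\in\Fil^i\M^*$ is just a restatement of the lemma at level $i$, so as written the step is circular. Separately, the base case is described only as ``differentiate the Frobenius functional equation and specialize at $u=p$''; since $\varphi$ sends $E$ to $u^p-p$, which vanishes to order $p$ at $u=p$, the specialization is delicate and your sketch does not establish the claimed $\O_K$-integrality of $(\nabla G)(p)G(p)^{-1}$. If you want to avoid the Galois-theoretic route, you need an actual argument for both of these points, and I do not see one emerging from the sketch.
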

\begin{proof} In this Lemma, we allow the base field $K$ to be finite ramified over $ W(\kappa)[\frac 1 p]$. In such generality, $ S_p$ is replaced by 
$ S_\pi : = \{ f\in S[\frac 1 p] | f (\varpi)\in \O_K\}$ for a fixed uniformizer $ \varpi\in \O_K$.  
We use the same idea in \cite[Prop. 4.6]{GLS-1} and \cite[Prop. 2.13]{Liu-phiN-lattice}.  As the above, we fix a uniformizer $ \varpi\in  \O_K $ with Eisenstein polynomial $E \in W(\kappa)[u]$,  a compatible system $\{\varpi_n\}_{n\geq 0}$ of $p^n$-th roots with $\varpi_0=\varpi$ as well as a compatible system $\{\zeta_n\}_{n\geq 0}$ of $p^n$-th roots of $1$. Write $\underline{\varpi}^\flat  \coloneqq (\varpi_n)_{n\geq 0}$ and $\underline\zeta^\flat \coloneqq (\zeta_n)_{n\geq 0}$ as elements in $\O_{\C_p}^\flat$ where $ \C_p$ is the $p$-adic completion of $\overline K$. Embed $\fS \to \Ainf$  and $S\to A_\cris$ given by $u \to [\underline{\varpi}^\flat]$. Let $K _\infty =\bigcup_{n = 1}^\infty K (\varpi _n )$, $L = \bigcup_{n=1}^\infty K_{\infty} (\zeta_{p ^n})$ and $K_{1^\infty}= \bigcup_{n =1}^\infty K (\zeta_{p ^n})$. We may always assume that $ K_\infty \cap K_{1^\infty} =K$ by selecting a suitable uniformizer $\varpi$ (this is only needed for $p=2$ by \cite[Lem.2.1]{WangXY}). 
Let $\tau$ be a topological generator of $\Gal (L/ K_{1 ^\infty}) $. We will also use $\tau$ to denote a lifting of $\tau$ in $G_K$ when it is acting on an element fixed by $\Gal(\overline{K}/L)$. 


Write $T^\vee : = \Hom_{\Z_p} (T, \Z_p)$ the dual of $T$.  Then exists an $\Ainf$-linear \emph{injection} 
\[ \iota_\fS : \Ainf \otimes _\fS \M \longrightarrow T^\vee \otimes_{\Z_p} \Ainf \]
so that $ \iota_\fS$ is compatible with Frobenius on both sides and $g (\Ainf \otimes_\fS \M) \subset \Ainf \otimes_\fS \M , \forall g \in G_K $ by using $G_K$-action from that on $ T^\vee\otimes_{\Z_p} \Ainf $ via $\iota_\fS$. Also $ \iota_\fS$ induces the following commutative diagram  
\[ \xymatrix{ \Ainf \otimes_{\varphi, \fS}\M \ar@{=}[r] & \Ainf \otimes_{\fS} \M^* \ar[d] \ar[r] ^-{\iota^* _\fS} & T^\vee \otimes_{\Z_p} \Ainf\ar[d] \\  
 & \Acris \otimes_S \D \ar[r]^-{\iota_S} & T^\vee \otimes_{\Z_p} \Acris[\frac 1 p ]}
\]
Here $\iota_\fS^* = \Ainf \otimes_{\varphi, \Ainf}\iota_\fS$, $\iota_S = A_\cris[\frac 1 p] \otimes_{ \Ainf} \iota^*_\fS$, and the left vertical arrows are induced by $ \M^* \subset \D$, and all arrows here are injective. In particular, we may regard $ \Ainf \otimes_\fS \M^* $ (resp. $ \Acris \otimes _S \D$) as submodule of $T^\vee \otimes_{\Z_p} \Ainf$ (resp. $ T^\vee \otimes \Acris [\frac 1 p]$) via $ \iota _\fS$ (resp. $\iota_S$). Note that the modules at the right columns have natural $G_K$-actions. As submodules, the left columns are stable under these $G_K$-actions. The $G_K$-action on $ \D$ is explicitly given by the following formula (see formula (6.19) in \cite{LL2021comparison}): 
\begin{equation}\label{Eqn-G-action}
g (x) = \sum_{i = 0} ^\infty \nabla^i (x) \gamma_i ( {g([\underline \varpi^\flat]) - [\underline \varpi^\flat]}), \ \  \forall g \in G_K,\  \forall x \in \D,       
\end{equation}
where $ \gamma_i= \frac{ (\bullet)^i}{i!} $ is $i$-th divided power. In particular, $G_\infty$-acts on $ \D$-trivially. Pick a $ \tau$ so that $\frac{g([\underline \varpi^\flat])}{[\underline \varpi^\flat]}= [\underline \zeta^\flat]$. The above formula is simplified to $ \tau (x) = \sum\limits_{i = 0}^\infty \nabla^i (x) \gamma_i (u ([\underline \zeta^\flat]-1)) $. Write $ w:  = \frac{[\underline \zeta^\flat]-1}{E} \in \Ainf $.  We see that 
 $ \tau (x) \subset \mathcal R [\frac 1 p] \otimes_S \D $ where $ \mathcal R  \subset \Acris$ is the $p$-adic completion of $\fS[w , \{\gamma_i (E)\}_{i \geq 1 }]$.



Write $ \D_{\Acris} :=  \Acris \otimes_S \D$ and $ \M^*_{\Ainf} : = \Ainf \otimes_\fS \M^*$. Set $\Fil^i \D_{A_\cris} : = A_\cris [\frac 1 p] \otimes_ \fS \Fil^i \M^*$ and  $\Fil^i \M^*_{\Ainf} : = \Ainf \otimes_{\fS}\Fil ^i \M^*$. By a similar arguments as in Remark \ref{rmk-Tor}, we see that $ \Fil^i \D_{A_\cris}$, $ \Fil^i \M^*_{\Ainf}$ injects to $ \D_{A_\cris}$, $ \M^*_{\Ainf}$ respectively. We claim that $ g (\Fil^i \M^*_{\Ainf})\subset \Fil ^i \M^*_{\Ainf}, \forall g \in G_K$. To prove the claim, it suffices to show that $$ \Fil^i \M^*_{\Ainf}= F^i : = \{ x \in \M^*_{\Ainf}| (1\otimes \varphi_{\M}) (x) \in E^i (\Ainf \otimes_{\fS}\M)\}. $$ Consider  $  \M^* $ as an $\fS$-submodule  of $  \M$ via $ 1\otimes \varphi_\M$. Then $\Fil^i \M^* = \M^* \cap E^i \M$ and 
$F^i = (\Ainf \otimes_\fS \M^*) \cap E^i (\Ainf \otimes _\fS \M)$. Since $ \Ainf$ is flat over $\fS$, we have $\Fil^i \M^*_{\Ainf} =F^i $ as required.


As in the proof of  \cite[Prop. 4.6]{GLS-1} and \cite[Prop. 2.13]{Liu-phiN-lattice}, for any subring  $ A \subset \Acris[\frac 1 p]$ such that $ \varphi(A) \subset A$,   let $ I^{[i]} A = \{ x \in A| \varphi ^n (x) \in \Fil ^i B_\dR, \forall n \in \mathbb N\},  $ and $ I ^{[n]}: = I ^{[n]}\Ainf= ([\underline \zeta^\flat]-1)^n \Ainf$. 
Now we claim that $ (\tau- 1)^n (\Fil^i \M^* )\subset \Fil^{i-n}\M^* \otimes_\fS I^{[n]}$. To prove this claim, we first prove that  $$ (\tau-1)^n (\Fil^i \D)\subset \Fil^{i-n }\D \otimes_{S[\frac 1 p]}  I^{[n] } \calR [\frac{1}{p}]\subset \Fil^{i-n} \M^* \otimes _\fS I^{[n]} A_\cris[\frac 1 p]. $$
By a similar argument in \cite[(5.1.2)]{LiuT-CofBreuil}, $ (\tau -1) ^n (x) = \sum\limits_{m \geq n} a_{mn} \nabla^m (x) \gamma_m ( u ([\underline \zeta^\flat]-1)) $ with $ a_{mn}\in \Z$. For $ n \leq m \leq i$, 
we have $ \nabla^m (x) \in \Fil^{i -m}\D$ by Griffith transversality $ \nabla  (\Fil^i\D)\subset \Fil^{i-1}\D  $  and $ [\underline \zeta^\flat]-1 = E w $.   Since $ E^{m -n} \Fil^{i -m}\D \subset \Fil^{i -n}\D$,  $ (\tau -1)^n (x) \in  \Fil^{i-n}\D \otimes_{S[\frac 1 p ]} I^{[n]}\calR [\frac 1 p]$. 

Note that $\Fil^i \M^*_{\Ainf} = \Fil^i \M^* \otimes_\fS {\Ainf}$ and $\forall g \in G_K, \  g (\Fil^i \M^*)\subset \Fil^i \M^*_{\Ainf} = \Ainf \otimes_\fS \Fil^i \M^*$. Then using that $\Fil^{i-n} \M^* $ is finite $ \fS$-free, we have 
$$ (\tau-1)^n (\Fil^i \M^*) \subset \Fil^{i -n}\M^* \otimes_\fS ( I^{[n]} A_\cris[\frac 1 p] \cap \Ainf ) = \Fil^{i-n} \M^* \otimes_\fS I ^{[n]}\Ainf .$$ This prove that claim.  

Now for any $x \in \Fil^i\M^*$, we have 
\[\nabla (x) = \sum_{n = 1}^\infty (-1)^{n -1} \frac{(\tau-1)^n}{n   u ([\underline \zeta^\flat]-1)}(x)  \] 
By the claim, we have $$\frac{(\tau-1)^n}{n  u ([\underline \zeta^\flat]-1) }(x)\in \Fil^{i -n}\M^* \otimes_\fS \frac{([\underline \zeta^\flat -1])^{n-1}}{n} \Ainf \subset \Fil^{i -1}\M^* \otimes \frac{w^{n -1}}{n}  \Ainf.$$ Note that $ \ev_\pi : S \to \O_K$ is the same as the canonical projection $ \theta: \Ainf \to \O_{\C_p}$. Now it suffices 
to check that $ \theta ( \frac{w^{n -1}}{n} ) \in \O_{\C_p}$. It turns out that $ v_p (w) = \frac{1}{p-1}$ and this follows that  $ \frac {n-1}{p-1}\geq v_p (n ) $. 
\end{proof}
\subsection{Construction of basis in $\Fil^i \M^*$}
Recall that $M : = \ev_p (\M^*)\subset D$,  $ \Fil ^i M : = \ev_p (\Fil^i \M^*)$ and $ \gr^i M : = \Fil^i M / \Fil^{i +1} M$. Clearly, $ M \subset D$ is a finite free $ \O_K$-lattice in $D$. Note that $ \Fil ^i M \subset \Fil^i D \cap M$ but may not equal. Consequently, $ \gr^i M$ may have $p$-power torsion. 
In the following, by discussion around \cite[Lemma 4.4]{GLS-1}, we can select $ \O_K$-basis $e_1 , \dots , e_d $ of $M$ so that $ \{e _i, \dots , e_d\}  $ forms a $K$-basis of $ \Fil^{r_i}D $. Write $ d_i : = d- \dim_K \Fil^i D $. Then it is easy to check there exists two  $\O_K$-bases $ \{e^{(i )}_j| j = d_i +1, \dots , d\}$ and $ \{f ^{(i)}_j, j = d_i +1, \dots, d\} $ of $ \Fil^i M$ so that $ e^{(i +1)}_j= p ^{n_{ij}} f^{(i)}_j $, for $ j = d_{i +1}+1 , \dots , d$. We can set $ f^{(0)}_j = e^{(0)}_j = e_j, j =1 , \dots , d$.

We regard $\M$ as an $\varphi (\fS)$-submodule of $\M^*$.  There exists a $\varphi (\fS)$-basis $\e_i \in \M$ so that $ \ev _p (\e_j) =e_j$. Set  $$J: =\{r_j + m p  | j =1 , \dots ,  d; m \in \Z_{\geq 0}; 0 \leq r_j + mp\leq h    \} , $$ and 
$$\mathcal J: =\{r_j + m p  | j =1 , \dots ,  d; m >0 ; 0 \leq r_j + mp\leq h    \} . $$
Note that $ \mathcal J \subset J$ and $ J \setminus \mathcal J= \{ r_ j |\  p \nmid r_j- l, \forall l <r_j, l \in J\}$. 
Recall that our main theorem \ref{Thm-1} claims that $ \gr^i M = 0$ if $ i \not \in \mathcal J$. 

\begin{proposition}\label{prop-constuct}
    For each $0 \leq i \leq h$, there exists \[\{\alpha^{(i)}_j, j =1, \dots , d_i, \ \e^{(i)}_j, j= d_i +1 , \dots , d \}\subset \Fil^i \M ^*  \]so that \begin{enumerate}
        \item $\{\alpha^{(i) }_j , \e^{(i)}_j\} $ is an $\fS$-basis of $ \Fil^i \M ^*$; 
        \item $\ev_p (\alpha^{(i)}_j) = 0$, $\ev _p (\e^{(i)}_j) = e_j^{(i)}$; 
        \item For each $ j = 1, \dots , d_i$, we have 
        \begin{equation}\label{Eqn-alpha}
          \alpha^{(i)}_j = \sum_{ (l, k)  \in J_i}  a_{lk}^{(ij)} E^{i - l} \e_k^{(l)}, \ \ \ \text{ where } J_i: = \{(l, k )|\in J, l < i; d_{l}+1\leq k \leq d\}  
        \end{equation} for some  $ a_{lk}^{(ij)}\in \O_K$. 
    \end{enumerate}
\end{proposition}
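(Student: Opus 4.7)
The natural approach is induction on $i$, building the level-$(i+1)$ basis from the level-$i$ basis via the short exact sequence
\[
0 \to E\Fil^i\M^* \to \Fil^{i+1}\M^* \xrightarrow{\ev_p} \Fil^{i+1}M \to 0
\]
from Lemma \ref{lemma-basic}(4). The base case $i=0$ is immediate: $d_0=0$, so only the $\e_j^{(0)}:=\e_j$ are needed, and (3) is vacuous.

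For the inductive step, the natural candidates are the following. For $j\le d_i$, set $\alpha_j^{(i+1)}:=E\alpha_j^{(i)}$; this lies in $\Fil^{i+1}\M^*$ by Lemma \ref{lemma-basic}(3), and (3) at level $i+1$ follows from the level-$i$ formula by the shift $E\cdot E^{i-l}=E^{(i+1)-l}$, with the same index set $\{l\in J:l<i\}\subset\{l\in J:l<i+1\}$. For $d_i<j\le d_{i+1}$ we have $r_j=i$ (this is the jump condition $d_{i+1}-d_i=\#\{j:r_j=i\}$), so the choice $l=i=r_j\in J$ is allowed in (3); set $\alpha_j^{(i+1)}:=E\e_j^{(i)}$, which realizes (3) with coefficient $a_{i,j}^{(i+1,j)}=1$. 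Finally, for $j>d_{i+1}$ take $\e_j^{(i+1)}$ to be a lift of $e_j^{(i+1)}=p^{n_{ij}}f_j^{(i)}$, starting from $p^{n_{ij}}\sum_k c_{kj}^{(i)}\e_k^{(i)}$ (where $f_j^{(i)}=\sum c_{kj}^{(i)}e_k^{(i)}$ is the change-of-basis inside $\Fil^iM$) and modified by a correction $\delta_j\in E\Fil^{i-1}\M^*$ chosen so that the result lies in $\Fil^{i+1}\M^*$.

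The $\fS$-basis property is then verified by reducing modulo $E\Fil^{i+1}\M^*$: in the SES
\[
0\to\gr^i\M^*\xrightarrow{E}\Fil^{i+1}\M^*/E\Fil^{i+1}\M^*\to\Fil^{i+1}M\to 0,
\]
the proposed $\alpha$'s land in the subobject $\gr^i\M^*$ (corresponding under the $E$-isomorphism to the level-$i$ generators $\alpha_j^{(i)}$ and $\e_k^{(i)}$ for $d_i<k\le d_{i+1}$), while the $\e_j^{(i+1)}$ restrict to the prescribed $\O_K$-basis of $\Fil^{i+1}M$. Linear independence and generation thus reduce to showing that $\{\alpha_j^{(i)}\}_{j\le d_i}\cup\{\e_k^{(i)}\}_{d_i<k\le d_{i+1}}$ forms an $\O_K$-basis of the rank-$d_{i+1}$ free module $\gr^i\M^*$, which is established from the structural SES $0\to\gr^{i-1}\M^*\xrightarrow{E}\gr^i\M^*\to\gr^i M\to 0$ together with the inductive hypothesis.

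The principal obstacle I anticipate is ensuring, when $\gr^iM$ carries $p$-power torsion, that the correction $\delta_j$ needed to land $\e_j^{(i+1)}$ inside $\Fil^{i+1}\M^*$ can be produced inside $E\Fil^{i-1}\M^*$ and expanded so that (3) remains compatible through further induction. The key technical input should be Lemma \ref{lem-nabla-integral}: applying $\nabla$ to a proposed lift places $\nabla(\e_j^{(i+1)})$ inside $S_p\otimes_\fS\Fil^i\M^*$ with $\ev_p$-integral coefficients, which after reexpansion via the level-$i$ formula (3) stays within the span of $\e_k^{(l)}$ with $l\in J$. This precise mechanism is what enforces the arithmetic constraint $i\in\mathcal{J}$ in Theorem \ref{Thm-1}.
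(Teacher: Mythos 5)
Your overall framework is right — induction on $i$, the short exact sequence from Lemma~\ref{lemma-basic}(4), verification of the basis property modulo $E$, and Lemma~\ref{lem-nabla-integral} as the key analytic input. The choice $\alpha_j^{(i+1)}=E\alpha_j^{(i)}$ for $j\le d_i$ is also what the paper does. But there are two genuine gaps, one structural and one in the core technical step, and you have essentially flagged both without closing either.

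\textbf{The basis claim fails when $\gr^iM$ has torsion.} Your reduction asserts that $\{q(\alpha_j^{(i)})\}_{j\le d_i}\cup\{q(\e_k^{(i)})\}_{d_i<k\le d_{i+1}}$ is an $\O_K$-basis of $\gr^i\M^*$, a free module of rank $d_{i+1}$. But $\gr^i\M^*$ sits in $0\to\gr^{i-1}\M^*\to\gr^i\M^*\to\gr^iM\to 0$ with $\gr^{i-1}\M^*$ of rank $d_i$. Your $q(\alpha_j^{(i)})$ all lie in $\ker\bar q$, and the remaining $d_{i+1}-d_i$ elements can only contribute the \emph{torsion-free} part of $\gr^iM$; when $\gr^iM$ has $p$-torsion (the very situation Theorem~\ref{Thm-1} is about, e.g.\ $i=p$ for a representation with weights $\{0,p+1\}$), the quotient $\gr^iM$ has more than $d_{i+1}-d_i$ cyclic factors, so $d_{i+1}$ elements of that form cannot generate. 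In the rank-one case the submodule $\iota_E(\gr^{i-1}\M^*)$ is $p^m\gr^i\M^*$ for some $m>0$, so $q(\alpha_1^{(i)})$ is a non-generator. The paper circumvents this by partitioning $R=\{d_i+1,\dots,d\}$ into $R_0,R_f,R_\tor$, by Lemma~\ref{lem-choose-basis} re-choosing the basis of $\gr^{i-1}\M^*$ so that the torsion classes $p^{n_j}q(\beta_j)$ appear as basis elements, and then taking the new $\alpha^{(i+1)}$'s to be $E\hat\alpha_j$, $E\beta_j$ ($j\in R_\tor\cup R_f$) rather than $E\e_j^{(i)}$ — so the indices of the $\alpha$'s and the $\e$'s are redistributed according to the torsion structure, not the Hodge filtration alone.

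\textbf{The correction step is the crux and is left open.} You correctly suspect that the $\delta_j$ must be produced from Lemma~\ref{lem-nabla-integral}, and that "this mechanism enforces the constraint $i\in\mathcal J$" — but that is precisely the content one needs to supply. The paper's argument splits on $i\in J$ vs.\ $i\notin J$. When $i\notin J$, it writes the correction as $\sum x_{lk}E^{i-l}\e_k^{(l)}$ and solves for $x_{lk}\in\O_K$ by a descending induction on $l\in J$, using $\nabla(E^{i-l}\e_k^{(l)})=(i-l)E^{i-l-1}\e_k^{(l)}+\cdots$ and the fact that $p\nmid(i-l)$ when $i\notin J$; this divisibility is exactly why nothing new in $\gr^iM$ can be torsion and is the arithmetic heart of the proof. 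When $i\in J$ one cannot divide by $i-l$ in general, and the argument switches to the diagram chase and Lemma~\ref{lem-choose-basis} described above. Without the case split and the back-induction on $l$ your proposal does not actually construct the $\fS$-basis, and the mechanism you name stays a black box.
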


  We make induction on $ i$ to prove the above proposition. The case of $ i = 0$ is trivial as we can take $ \e_j^{(0)} = \e_j$ (note that $d_0 = 0$). Now assume that the statement holds for $\Fil^{i}\M ^*$, consider the statement for $\Fil^{i+1}\M^*$. The case $ i \in J$ and $ i \not \in J$ makes a big difference. We divide these two situations into two different subsections. 

  \subsubsection{The case $i \not \in J$. }\label{subsub-1} First note in this case, $p \nmid i-l , \forall l \in J, i > l$. Also $ \gr^i D = 0$ and thus $ \Fil^i D = \Fil^{i +1} D$. So $ d_i = d_{i+1}$.  

For $j = d_{i}+1 , \dots , d$,  we construct $\beta_j: =  \e^{(i)}_j + \sum\limits_{(l, k ) \in J_i}  x^{(ij)}_{lk} E^{i-l} \e^{(l)}_{k} $ with $ x_{lk}^{(ij)} \in \O _K$ undetermined so that $ \beta_j  \in \Fil^{i+1} \M^*$. Since $ E^{i-l} \e ^{(l)}_k$ and $ \e^{(i)}_j$ are in $ \Fil^i \M^*$ and $ \ev _p (\beta_j) = \ev_p (\e_j ^{(i)}) = e^{(i)}_j \in \Fil^i D = \Fil^ {i+1}D $,  to construct $ \beta_j \in \Fil^{i +1}\M^*$, it suffices to select $ x_{lk}^{(ij)}$ so that $ \nabla (\beta_j) \in \Fil^i \D$. To simplify the notation, we write $ x_{k l}= x_{kl}^{(ij)}$. 

By Lemma \ref{lem-nabla-integral}, we have $ E \nabla( \e^{(i)}_j) = \sum\limits_{k=1} ^{d_i} c_k \alpha^{(i)}_k  + \sum\limits_{k > d_i} c_k' \e^{(i)} _k$ with $c_k , c'_k \in  S_p $.  Note that $ E \nabla (\e^{(i)}_j )\subset E \D$, then $ \ev_p (E \nabla (\e^{(i)}_j )) = 0$. Since $ \{\ev_p (\e^{(i)}_j)\} $ forms a basis of $ \Fil^i D$, we conclude that $ c'_k\in E S [\frac 1 p]$. Thus, 
 \begin{eqnarray*}
     \nabla( \e^{(i)}_j) &= &\sum\limits_{k=1} ^{d_i} c_k \frac{\alpha^{(i)}_k} { E}    + \sum\limits_{k > d_i} \frac{c_k'}{E}   \e^{(i)}_k  \\ &= & \sum\limits_{k=1} ^{d_i} c_k(p)   \frac{\alpha^{(i)}_k} { E}    + \sum\limits_{k > d_i} (\frac{c_k'}{E})    \e^{(i)}_k   +  \sum\limits_{k=1} ^{d_i} (c_k - c_k(p))    \frac{\alpha^{(i)}_k} { E}.     
 \end{eqnarray*}
Since $c_k - c_k(p) $ is in $E S [\frac 1 p ]$, it suffices to pick $x_{lk}\in \O_K$ so that 
$$\nabla \left (\sum_{(l , k) \in J_{i}} x_{lk} E^{i-l} \e^{(l)}_{k}\right ) + \sum\limits_{k=1} ^{d_i} c_k(p)   \frac{\alpha^{(i)}_k} { E}  \in \Fil^i \D.  $$ By induction on the shape of $\alpha_j^{(i)}$, we can write 
\[ \sum\limits_{k=1} ^{d_i} c_k(p)   \frac{\alpha^{(i)}_k} { E}  = \sum_{(l , k)\in  J_i} b_{lk}E^{i - l-1 } \e_k^{(l)} \]
for some $ b_{lk}\in \O_K$,  which is a  combination of $ a^{(ij)}_{lk}$ and $c_k (p)$ (note that $c_k (p)$ is in $\O_K$ by Lemma \ref{lem-nabla-integral}). Hence it suffices to show that the existence of $ x_{lk}\in \O_K$ so that 
\[\nabla \left (\sum\limits_{(l , k) \in  J_i } x_{lk} E^{i-l} \e^{(l)}_{k}\right ) +  \left (\sum_{(l, k ) \in J_{i}} b_{lk}E^{i - l-1 } \e_k^{(l)} \right ) \in \Fil^i \D.  \]
We prove this by back induction on $ l$.  Let $s = \max \{l \in J|  l \leq i-1\}$. Note that $\nabla ( x_{sk} E^{i-s} \e^{(s)}_{k})=  x_{sk} (i-s) E^{i-s-1 } \e^{(s)}_{k} +  x_{sk} E^{i-s} \nabla (\e^{(s)}_{k}) $. By the calculation in the third paragraph of this subsection \S \ref{subsub-1}, we have 
 $ E^{i -s} \nabla( \e^{(s)}_j) = \sum\limits_{k=1} ^{d_s} c_k E^{i-s-1} \alpha^{(s)}_k  + \sum\limits_{k > d_s} c_k' E^{i-s-1}\e^{(s)} _k$ with $c_k, c'_k \in  S_p $ and $c'_k \in E  S[\frac 1 p]$. Using the induction on $\alpha_k^{(s)}$ and note that $$ \sum\limits_{k=1} ^{d_s} (c_k-c_k (p))  E^{i-s-1} \alpha^{(s)}_k  + \sum\limits_{k > d_s} c_k' E^{i-s-1}\e^{(s)} _k \in \Fil^i\D, $$ we have  $ E^{i -s} \nabla( \e^{(s)}_j) = (\sum \limits _{(l, k ) \in J _{s}} c''_{lk} E^{i -l -1 } \e^{(l)}_k) + z$ with $z \in \Fil^i \D $ and $c''_{lk}\in \O_K $. 
 Therefore, by setting $ x_{sk} = -(i-s)^{-1}b _{sk} $ (note that $p \nmid i-s$ as $ i \not \in J$), we need to further  solve new $x_{lk}\in \O_K$ for 
\[\nabla \left ( \sum \limits_{(l, k ) \in J_{i}, l <s } x_{lk} E^{i-l} \e^{(l)}_{k}\right ) + \left (\sum_{(l, k ) \in J_{i}, l <s} b_{lk}E^{i - l-1 } \e_k^{(l)} - \sum \limits _{l \in J _{s}} (i-s)^{-1}b _{sk} c''_{lk} E^{i -l -1 } \e^{(l)}_k\right ) \in \Fil^i \D.  \]
Continue this step and decrease $l \in J$ until $l =0$. In this situation, $ \nabla (x_{0k} E ^i \e_k^{(0)}) = x_{0k} i E^{i -1} \e^{(0)}_k + x_{0k} E^i \nabla (\e ^{(0)}_k)$. As $  E^i \nabla (\e ^{(0)}_k) \in \Fil^i \D$, $p \nmid i$, $ x_{0k}\in \O_K$ can be always found. This completes the construction of $\beta_j$. 

Now we claim that $\Fil^{i +1}\M^*$ is generated by $ \{E\alpha^{(i)}_j, j =1 , \dots , d_i, \beta_j, j = d_i +1 , \dots , d\}.  $ Write $F^{i+1}\M^*$ be $\fS$-submodule of $ \Fil^i \M^*$ generated by   $ \{E\alpha^{(i)}_j, j =1 , \dots , d_i, \beta_j, j = d_i +1 , \dots , d\}$. By the above construction, $ \beta_j \in \Fil^{i+1 }\M^*$. So $ F ^{i+1}\M ^* \subset \Fil^{i+1}\M^*$. By the construction  $\beta_j =  \e^{(i)}_j +  \sum \limits_{(l, k)  \in J_{i}} x^{(ij)}_{lk} E^{i-l} \e^{(l)}_{k} $, $\{\alpha^{(i)}_j, j =1 , \dots , d_i, \beta_j, j = d_i +1 , \dots , d\}$ is another basis for $ \Fil^i \M^*$. In particular, $ E \Fil ^i \M^* \subset F^{i+1}\M^*$. Since $ \ev_p (\beta_j)= \ev_p (\e_j ^{(i)})= e_j ^{(i)}$, $\ev_p (F^{i+1}\M^*) = \Fil^{i} M$. But  $ F^{i+1}\M^* \subset \Fil^{i+1}\M^*  $ and $ \ev_p (\Fil^{i+1}\M^*) = \Fil^{i+1} M$. This forces that $ \Fil^i M = \ev _p (F^{i+1} \M^*) =\ev _p (\Fil^{i+1}\M^*) = \Fil^{i+1}M. $ Together with that $ \Fil^ {i+1} \M^* / E \Fil^i \M^* = \Fil^{i+1} M$ and that $E \Fil ^i \M^* \subset F^{i+1}\M^*$, we conclude that $F^{i+1}\M^*= \Fil^{i+1}\M^*$ and $ \gr^i M = \Fil^i M /\Fil^{i+1} M = 0$. 
Finally, since  both $ f_j^{(i)} $ and $ e_j^{(i)}$ are bases of $ \Fil^i M$, we may select an invertible matrix $ A \in \GL_{d-d_i} (\O_K)$ so that  $ (\beta'_{d_i +1 }, \dots , \beta'_d)= (\beta_{d_i +1}, \dots , \beta_{d}) A$ satisfies $ \ev_p (\beta'_j) = f^{(i)}_j$. Since $\gr^i M = 0 $, $e^{(i+1)}_j =  f^{(i)}_j$ and we can set $ \e^{(i+1)}_j: = \beta'_j$ and $ \alpha^{(i+1)}_j = E \alpha^{(i)}_j$ as required. 
\subsubsection{The case $i \in J $} First let us consider the following commutative diagram:
\[ \xymatrix{ 0 \ar[r]  & \Fil^i \M^* \ar[d]\ar[r]^- E &  \Fil^{i+1} \M^*\ar[d]\ar[r]^-{\ev_p}  & \Fil^{i+1} M\ar[d] \ar[r]& 0\\
 0 \ar[r]  & \Fil^{i-1} \M^*\ar[d] \ar[r]^-E  &  \Fil^{i} \M^*\ar[r]^-{\ev_p} \ar[d]^q & \Fil^{i} M \ar[r]\ar[d]^{\bar q} & 0\\
  0 \ar[r]  & \gr^{i-1}\M^* \ar[r]^-{\iota_E} &  \gr^{i} \M^*\ar[r] & \gr^{i} M \ar[r] & 0}\]
Note that all rows and columns are short exact. By induction,  $\{\alpha^{(i)}_j, \e^{(i)}_j\}$ is an $ \fS$-basis of $ \Fil^i \M^*$ so that $ \{\ev _p (\e^{(i)}_j) = e^{(i)}_j\}$. Recall that $ \{f^{(i)}_j\}$ is another $\O_K$-basis of $ \Fil^i M$ so that $ e^{(i+1)}_j = p^{n_{ij}} f^{(i)}_j$. 
Similar to the argument at the end of \S \ref{subsub-1}, we may choose $ \beta_j\in \Fil^i\M^*$ so that $ \ev _p (\beta_j) = f^{(i)}_j$ and then $\{\alpha^{(i)}_j, \beta_j\}$ is still an $ \fS$-basis of $ \Fil^i \M^*$. To simplify the notation, write $ \alpha_j = \alpha^{(i)}_j$.

Now let us discuss the structure of $ \gr^i M$. Divide the set $R:= \{j|  j =d_i +1 , \dots , d\}$ into three subsets $R_0$, $R_f$ and $R_{\tor}$ in the following: 
$R_0 : = \{ j \in R | \bar q (f^{(i)}_j)= 0\}$;  $R_f: = \{j \in R| \bar q (f^{(i)}_{j}) \text{ is torsion free} \}$; $R_\tor: = \{j\in R | \bar q (f^{(i)}_j) \text{ is killed by } p ^{n_{ij}}\}.  $ Write $ \bar f_j = \bar q (f^{(i)}_j)$, $ n _{j}= n _{ij}$,   $ W= \O_K = W(\kappa)$ and $ W_n : = W /p ^n W$. 
Now we have 
\[\gr^i M = \bigoplus_{j \in R_f} W \bar f_j \oplus \bigoplus_{j \in R_\tor} W_{n_{j}} \bar f_j. \]
\begin{lemma}\label{lem-choose-basis}
    There exists an $\O_K$-basis $\{\tilde \alpha_j\}$ of $ \gr^{i-1}\M^*$ (as a submodule of $ \gr ^i \M^*$ via $\iota_E$) so that 
    for any $ j \in R_\tor$ there exists a unique $k(j)$ satisfying $\tilde \alpha_{k(j)}= p ^{n_j} q (\beta_j)$. 
\end{lemma}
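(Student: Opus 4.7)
The plan is to exhibit the elements $\{p^{n_j} q(\beta_j)\}_{j\in R_\tor}$ as $\O_K$-linearly independent members of $\iota_E(\gr^{i-1}\M^*)$ (viewed as $\gr^{i-1}\M^*$), and then extend them to an $\O_K$-basis. The desired $\{\tilde\alpha_j\}$ is such an extension, with $k(j)$ recording the position of $p^{n_j} q(\beta_j)$.

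First I would verify membership. Chasing the right column of the diagram, $q(\beta_j)$ projects to $\bar q(\ev_p(\beta_j)) = \bar f_j$ in $\gr^i M$, and by definition of $R_\tor$ we have $p^{n_j}\bar f_j = 0$. Hence $p^{n_j} q(\beta_j)$ lies in the kernel of $\gr^i \M^* \to \gr^i M$, which by exactness of the bottom row equals $\iota_E(\gr^{i-1}\M^*)$. So we may regard $p^{n_j} q(\beta_j)$ as an element of $\gr^{i-1}\M^*$.

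Second I would establish $\kappa$-linear independence of these elements modulo $p\,\gr^{i-1}\M^*$. Assume a relation $\sum_{j\in R_\tor} c_j\,p^{n_j} q(\beta_j) = p w$ in $\gr^{i-1}\M^*$ with $c_j\in\O_K$ and $w\in\gr^{i-1}\M^*$. Regarded inside $\gr^i\M^*$, the element $\sum_{j} c_j p^{n_j-1} q(\beta_j) - w$ is annihilated by $p$. Since $\gr^i\M^*$ is $\O_K$-free by Lemma \ref{lemma-basic}(2), it has no $p$-torsion, hence $\sum_j c_j p^{n_j-1} q(\beta_j) = w$, which in particular lies in $\gr^{i-1}\M^*$. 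Projecting to $\gr^i M$ yields $\sum_j c_j p^{n_j-1}\bar f_j = 0$, and the stated decomposition $\gr^i M = \bigoplus_{j\in R_f} W\bar f_j \oplus \bigoplus_{j\in R_\tor} W_{n_j}\bar f_j$ forces $c_j p^{n_j-1}\equiv 0 \pmod{p^{n_j}}$, i.e.\ $c_j\in p\O_K$, for every $j\in R_\tor$.

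Finally, by Nakayama's lemma, the $|R_\tor|$ elements $\{p^{n_j} q(\beta_j)\}_{j\in R_\tor}$, having $\kappa$-linearly independent images in $\gr^{i-1}\M^*/p$, can be completed to an $\O_K$-basis $\{\tilde\alpha_j\}$ of the finite free module $\gr^{i-1}\M^*$, and $k(j)$ is then uniquely determined as the index of $p^{n_j} q(\beta_j)$ in that basis. The only substantive step is the torsion calculation in the middle paragraph; the rest is diagram chasing and Nakayama. The mild subtlety there is that one must divide the relation by $p$ inside the ambient $p$-torsion free module $\gr^i\M^*$, rather than working directly in $\gr^{i-1}\M^*$, to extract the constraint $c_j p^{n_j-1}\equiv 0\pmod{p^{n_j}}$.
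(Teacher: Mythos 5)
Your proof is correct, and it takes a more direct route than the paper's. The paper introduces the auxiliary quotient $N' := \gr^i\M^*/N$ where $N := \bigoplus_{j\in R_f} W\,q(\beta_j)$, shows $N'$ is torsion-free, produces the short exact sequence $0 \to \gr^{i-1}\M^* \to N' \to \bigoplus_{j\in R_\tor} W_{n_j}\bar f_j \to 0$, and then splits $N'/pN'$ to exhibit a complementary family $\{\alpha''_l\}$ alongside $\{\alpha'_j\}$. The role of torsion-freeness of $N'$ there is precisely to kill $\Tor_1(N',\kappa)$ and hence make the connecting map injective, which is what guarantees the $\alpha'_j \bmod p$ are linearly independent in $\gr^{i-1}\M^*/p$. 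You prove that same linear independence by a bare-hands computation: push the relation $\sum c_j p^{n_j}q(\beta_j) = pw$ up into the $p$-torsion-free $\gr^i\M^*$, divide by $p$, and then read off the constraint on $c_j$ from the explicit decomposition of $\gr^i M$. This avoids the auxiliary module and the Tor bookkeeping entirely, at the price of not describing the complementary basis vectors explicitly — but the lemma is purely an existence statement, so Nakayama alone suffices for the extension step. One small point worth making explicit in your write-up: you implicitly use $n_j \geq 1$ for $j \in R_\tor$ (so that $p^{n_j-1}$ is defined), which holds because if $n_{ij}=0$ then $f^{(i)}_j = e^{(i+1)}_j \in \Fil^{i+1}M$ and hence $\bar q(f^{(i)}_j)=0$, placing $j$ in $R_0$ rather than $R_\tor$.
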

\begin{proof} Let $ N : =\bigoplus\limits_{j \in R_f} W q(\beta_j) \subset \gr^i \M^* $ and $ N' := \gr^i \M^* / N $. It is easy to check that $ N \cap  \gr^{i-1}\M^* = 0$ and $N'$ is torsion free. Then we have an exact sequence: 
\[ \xymatrix{0 \ar[r]  & \gr^{i-1}\M^* \ar[r]^-E  &  N' \ar[r] & \bigoplus\limits_{j \in R_\tor} W_{n_j} \bar f_j\ar[r] & 0}\]
Consider the natural map $\nu : \bigoplus\limits _{j \in R_\tor} \O_K q(\beta_j) \to \gr^i \M^* \to  N'$ and $ \nu' : N' \to \bigoplus\limits_{j \in R_\tor} W_{n_j} \bar f_j$. For any $ j \in R_{\tor}$, since $ \bar q (\ev_p (\beta_j)) = \bar f_j$, $ \nu' \circ \nu  \mod p$ is an isomorphism,  
this forces $\nu $ is  injective.  Note that $ \bar f_j$ is killed by $p ^{n_j}$, we have $ p ^{n_j} q (\beta_j) \in \gr^{i-1}\M^*$. Write $ \alpha'_j:  = p ^{n_j} q (\beta_j)$. 
Then we have the following commutative diagram
\[ \xymatrix{ 0 \ar[r] & \bigoplus\limits_{j \in R_\tor} \O_K \alpha'_j \ar[r]\ar@{_{(}->}[d] & \bigoplus\limits _{j \in R_\tor} \O_K q(\beta_j)  \ar[r]\ar@{_{(}->}[d]^\nu & \bigoplus\limits_{j \in R_\tor} W_{n_j} \bar f_j \ar[r]\ar[d]^\wr  & 0 \\  0 \ar[r]  & \gr^{i-1}\M^* \ar[r]^-E  & N' \ar[r]^-{\nu'} &  \bigoplus\limits_{j \in R_\tor} W_{n_j} \bar f_j \ar[r] & 0 .  }  \]    
Consider modulo $p$ for the second row, we have $N' / pN ' \simeq N'' \oplus \bigoplus\limits _{j \in R_\tor} W_1 \tilde f_j  $ where $ \tilde f_j = f_j \mod p$ and $N''$ is the image of $ \gr ^{i -1}\M^* / p$. Pick a lift $\alpha''_l  \in \gr^{i-1}\M^*$ so that the image of $ \alpha''_l  $ forms a $\kappa$-basis of $ N''$. By Nakayama's lemma, we easily check that $ \{\alpha''_l, \alpha'_j \}$ forms an $\O_K$-basis of $ \gr^{i-1}\M^*$. Now just reindex $\{ \alpha''_l, \alpha'_j\}$ to $ \tilde \alpha_j, j =1 , \dots , d_i$ and the $\{\tilde \alpha_j\}$ is required basis of $\gr^{i-1}\M^*$. 
\end{proof}
Let $A \in \GL_{d_i} (\O_K)$ so that $ (\tilde \alpha_1, \dots , \tilde \alpha_{d_i}) = q(\alpha^{(i)}_1 , \dots , \alpha_{d_i}^{(i)}) A$ and set $ (\hat \alpha_1 , \dots , \hat \alpha_{d_i }) = (\alpha^{(i)}_1 , \dots , \alpha_{d_i}^{(i)})A$. 
 For each $j\in R_\tor$, set $\hat \beta_j =   p ^{n_j}\beta_j - \hat \alpha_{k(j)}$. Since $ q (\hat \beta_j) = 0 $ by the above lemma,  
$ \hat \beta _j \in \Fil^{i+1} \M ^*$. It is clear that  $ \ev_p (\hat \beta_j) = \ev_p (p ^{n_j} \beta_j ) = p ^{n _{ij}} f^{(i)}_j = e^{(i+1)}_j$. 

Now we define $ \alpha^{(i+1)}_j: = E \hat \alpha^{(i)}_j$ if $ j\in \{1, \dots , d_i\}\setminus \{k (j) | j \in R_\tor\} $; $ \alpha^{(i+1)}_{k(j)} :  = E \beta _j $ if $ j \in R _\tor$ and $ \alpha_j^{(i+1)}= E \beta_j$ if $ j \in R_f$; Define $ \e^{(i+1)}_j:  = \hat \beta_j$ for $j \in R_\tor$. For $ j \in R_0$, choose a lift $\e^{(i+1)}_j\in \Fil^{i+1}\M^*$ so that $ \ev_p (\e^{(i+1)}_j)= e^{(i+1)}_j$. 

Now we need to check that $\{\alpha_j^{(i+1)}, \e_j^{(i+1)}\}$ satisfies the requirement of Proposition \ref{prop-constuct} to complete the induction. First note that $ R_\tor\cup R_0$ is the set of indices $j$ for $ e^{(i+1)}_j$. So we have $ \ev _p (\e ^{(i+1)}_j)= e^{(i+1)}_j$, which forms a basis of $\Fil^{i+1}M . $

Next we check that $ \alpha_j^{(i+1)}$ satisfies the requirement in Equation \eqref{Eqn-alpha}.  If $ j\in \{1, \dots , d_i\}\setminus \{k (j) | j \in R_\tor\} $ then  $ \alpha^{(i+1)}_j: = E \hat \alpha^{(i)}_j$ and this follows the induction on $i$ and that $ \hat \alpha^{(i)}_j$ is $\O_K$-linear combination of $ \alpha^{(i)}_j$. Note that $\beta_j$ is also $\O_K$-linear combination of $ \e^{(i)}_j$ and $i\in J$. So  $ \alpha^{(i+1)}_{k(j)}   = E \beta _j $ for $ j \in R _\tor$ and $ \alpha_j^{(i+1)}= E \beta_j$ for $ j \in R_f$ still satisfy the requirement in Equation \eqref{Eqn-alpha}.

Finally, we check that  $\{\alpha_j^{(i+1)}, \e_j^{(i+1)}\}$ is an $\fS$-basis of $ \Fil^{i+1}\M^*$. First,  it is clear that $ \{\alpha_j^{(i+1)}, \e_j^{(i+1)}\} \subset \Fil^{i+1}\M^*$ by construction. 
Set $F^{i+1}\M^*$ be the $\fS$-submodule of $\Fil^{i+1}\M^*$ generated by $\{\alpha_j^{(i+1)}, \e_j^{(i+1)}\}$. Since $ \ev_p (\e_j^{(i+1)})= e_j ^{(i+1)}$ forms a basis of $ \Fil^{i+1} M$, by using the same argument in the end of \S \ref{subsub-1}, it suffices to show that $ E \Fil^i \M ^* \subset F^{i+1} \M^*$. Equivalently, we have to show that $ E \hat \alpha_j $  and $ E \beta_j $ are in $F^{i+1}\M^*$. From our construction, this is clear for $E \hat \alpha^{(i)}_j$ if $ j\in \{1, \dots , d_i\}\setminus \{k (j) | j \in R_\tor\} $,  $E \beta _j $ if $ j \in R _\tor$ and $  E \beta_j$ if $ j \in R_f$. For $j \in R_\tor$, since $ \e_j^{(i+1)} = \hat \beta_j = p ^{n_j} \beta_j - \hat \alpha_{k (j)}$. Then $ E \hat \alpha_{k(j)} = p ^{n_j} E\beta_j- E \e^{(i+1)}_j= p ^{n_j}\alpha^{(i+1)}_{k(j)} - E \e^{(i+1)}_j \in F^{i+1}\M^*. $ For $ j \in R_0$, note that $ \ev_p (\e^{(i+1)}_j ) = e^{(i+1)}_j =  \ev_p (\beta_j )$, we have $ \e ^{(i+1)}_j - \beta_j \in E \Fil^i \M^*$. Thus it is easy to check that $ \Fil^{i}\M^*$ has an $\fS$-basis $ \{\hat \alpha_j, j=1 , \dots d_i;  \beta_j, j \not \in R_0;  \e^{(i+1)}_j, j \in R_0 \}$. 
Hence $ E\beta_j$ is a linear combination of $ \{E\hat \alpha_j, j=1 , \dots d_i;  E\beta_j, j \not \in R_0;  E\e^{(i+1)}_j, j \in R_0 \}$ and hence in $ F^{i+1}\M^*$,  as required. 
\subsection{The proof of Theorem \ref{Thm-1}} 
\subsubsection{The case $i \not\in J$.} In the last paragraph of \S \ref{subsub-1}, we see that if $ i \not \in J$ then $ \Fil^{i +1}\M ^*$ is generated by 
$\{ E\alpha_i, \beta_j \}$ and  $ \ev_p (\beta_j) = e_j^{(i)} $ for $ j = d_i+1 , \dots , d$. Thus $ \ev_p (\Fil^{i+1}\M^*) = \Fil^i M$ and $ \gr^i M = 0$. 
\subsubsection{The case $i \in J \setminus \mathcal J$.} Note that $ i = r_j$ but $ p \nmid  r_j - l, \forall l < r_j, l \in J$. We can select an invertible matrix $A \in \GL_{d-d_i} (\O_K)$ so that if we replace $ \e^{(i)}_{d_i +1} , \dots , \e^{(i)}_d$  by $  (\e^{(i)}_{d_i +1} , \dots , \e^{(i)}_d) A$ then $ \ev_{p} (\e^{(i)}_{d_{i+1} +1}), \dots ,  \ev_p (\e^{(i)}_{d})$ is a basis of $ \Fil^{i+1} D$. 
As in \S \ref{subsub-1}, for $j = d_{i+1}+1 , \dots , d$,  we construct $\beta_j: =  \e^{(i)}_j +  \sum \limits_{ (l, k)  \in J_{i}} x^{(ij)}_{lk} E^{i-l} \e^{(l)}_{k} $ with $ x_{lk}^{(ij)} \in \O _K$ undetermined so that $ \beta_j  \in \Fil^{i+1} \M^*$. Since $ E^{i-l} \e ^{(l)}_k$ and $ \e^{(i)}_j$ are in $ \Fil^i \M^*$ and $ \ev _p (\beta_j) = \ev_p (\e_j ^{(i)}) $ is a basis for $\Fil^ {i+1}D $,  to construct $ \beta_j \in \Fil^{i +1}\M^*$, it suffices to select $ x_{lk}^{(ij)}$ so that $ \nabla (\beta_j) \in \Fil^i \D$. Note that all arguments in \S \ref{subsub-1} go through because the key assumption $ p \nmid i -s$ still holds here as $ s\in J, s < i$. Therefore, we construct $ \beta_j \in \Fil^{i+1}\M^* $ for $ j = d_{i+1}+1 , \dots , d$. Now $ \Fil^{i+1} M = \ev_p (\Fil^{i+1}\M^*)$ contains $ \ev_p (\beta_j) = e^{(i)}_j$  for $ j = d_{i+1}+1 , \dots , d$. Let 
$F^{i+1} M\subset \Fil^{i+1} M$ be the finite free $\O_K$-submodule generated by  $e^{(i)}_j$  for $ j = d_{i+1}+1 , \dots , d$. Then $\Fil^i M /F^{i+1} M$ is finite free $\O_K$-module with basis $ e_{d_i+1}, \dots,e _{d_{i+1}}$. Together with the fact that $ F^{i+1}M [\frac 1 p] = \Fil^{i+1} D$, we have 
$ F^{i+1} M = \Fil^i M \cap\Fil^{i+1} D$. This forces that $ F^{i+1}M = \Fil^{i+1} M = \Fil^i M \cap \Fil^{i+1} D$. Thus $ \gr^i M$ has no $p$-torsion as required. 

 
\bibliographystyle{amsalpha}
\bibliography{mybib}

\end{document}